\def\bar{\overline}
\def\hat{\widehat}
\def\tilde{\widetilde}
\def\D{\mathcal D}
\def\inter{\text{int}}
\def\det{\text{det}}
\def\E{\mathcal E}
\def\D{\mathcal D}
\def\expec{\mathbb{E}}
\def\t{\intercal}
\def\I{\beta}
\def\pgb{{\bf q}} 
\def\pt{{\bf p}} 
\title[Inverse problem with a white noise source]{Inverse problem for the wave equation with a white noise source}
\author{Tapio Helin}
\author{Matti Lassas}
\address{Department of Mathematics and Statistics, 
P.O. Box 68 ,
FI-00014 University of Helsinki.
Email: tapio.helin$@$helsinki.fi,
matti.lassas$@$helsinki.fi.}
\author{Lauri Oksanen}
\address{University of Washington,
Department of Mathematics,
Box 354350,
Seattle, WA 98195-4350.
Email: lauri.oksanen$@$math.washington.edu.}
\date{\today}
\begin{document}

\begin{abstract}
We consider a smooth Riemannian metric tensor $g$ on $\R^n$
and study the stochastic wave equation for the Laplace-Beltrami operator
$\p_t^2 u - \Delta_g u = F$.
Here, $F=F(t,x,\omega)$ is a random source that has white noise distribution supported on the boundary of some smooth compact domain $M \subset \R^n$. 
We study the following formally posed inverse problem with only one measurement.
Suppose that $g$ is known only outside of a compact subset of $M^{int}$
and that a solution $u(t,x,\omega_0)$ is produced by a single realization of the source $F(t,x,\omega_0)$. We ask
what information regarding $g$ can be recovered by measuring $u(t,x,\omega_0)$ on $\R_+ \times \p M$? 
We prove that such measurement together with the realization of the source determine the scattering relation of the Riemannian manifold $(M, g)$ with probability one. That is, for all geodesics passing through $M$, the travel times together with the entering and exit points and directions are determined. In particular, if $(M,g)$ is a simple Riemannian manifold and $g$ is conformally Euclidian in $M$, the measurement determines the metric $g$ in $M$.
\end{abstract}

\maketitle

 
\section{Introduction}

We consider the wave equation
\begin{align}
\label{eq:problem}
&\p_t^2 u(t,x) - \Delta_g u(t,x) = F(t,x) \quad \text{on $(0, \infty) \times \R^n$},
\\
&u|_{t = 0} = \p_t u|_{t=0} = 0,\nonumber
\end{align}
where $n\geq 2$ and $\Delta_g$ is the Laplace--Beltrami operator
corresponding to a smooth time-independent Riemannian metric $g(x) = [g_{jk}]^n_{j,k=1}$, that is,
\begin{equation*}
	\Delta_g u = \sum_{j,k=1}^n |g|^{-1/2} \frac{\p}{\p x^j} \left( |g|^{1/2} g^{jk} \frac{\p}{\p x^k} u\right),
\end{equation*}
where $|g| = \det(g_{jk})$ and $[g^{jk}]_{j,k=1}^n = g(x)^{-1}$.
Let $M \subset \R^n$ be a compact domain with smooth boundary.
We suppose that $g$ is known only outside of a compact subset $K \subset M^{int}$
and that the source $F$ is a realization of a random variable with the Gaussian white noise distribution
on $(0,\infty) \times \p M$.
Moreover, we assume that the Riemannian manifold $(M,g)$ is non-trapping and 
that $\p M$ is strictly convex with respect to the metric $g$.
We show that the scattering relation of $(M, g)$ is determined by $F$ and the trace of $u$ on $(0,\infty) \times \p M$ almost surely, see Theorem \ref{thm_main} below for the precise formulation.

In particular, if the Riemannian manifold $(M,g)$ is simple, then 
the pair $(F, u)$ on $(0,\infty) \times \p M$ determines $(M,g)$ almost surely
in each of the following cases:
\begin{itemize}
\item[(i)] the dimension $n = 2$ or
\item[(ii)] $n\geq 3$ and the metric is conformally Euclidean, that is, $g_{jk}(x) = a(x) \delta_{jk}$
for a strictly positive function $a$ or
\item[(iii)] $n\geq 3$ and the metric is close to the Euclidean metric.
\end{itemize}
Indeed, by Theorem \ref{thm_main} below, the case (i) follows from \cite{Pestov2005}, 
(ii) follows from \cite{Muhometov1978}, and (iii) from \cite{Burago2010}.
Moreover, there is a conjecture by Gunther Uhlmann
\cite{Uhlmann2004}, that the scattering relation determines 
any non-trapping compact manifold with boundary.
We refer to \cite{Stefanov2009} for work toward resolving the conjecture.

If the source $F$ in (\ref{eq:problem}) can be controlled, that is, 
if we can measure the trace of $u$ on $(0,\infty) \times \p M$ 
for all $F \in C_0^\infty((0,\infty) \times \p M)$,
then the problem to determine $(M,g)$ is equivalent with Gel'fand's inverse problem,
whence it has unique solution \cite{Belishev1987, Belishev1992}.
Contrary to the problem with a single measument as considered in the present paper,
Gel'fand's problem is overdetermined. 
Indeed, the dimension of the data in Gel'fand's problem is $2n-1$
which is strictly greater than the dimension $n \ge 2$ of the unknown $g|_M$.
Notice that $2n-1$ is the number of free variables
of the kernel of the map $F \mapsto u|_{(0,\infty) \times \p M}$,
since $F$ and the trace of $u$ are defined on the $n$ dimensional manifold $(0,\infty) \times \p M$
and the translation invariance in time accounts for the reduction of the dimension by one.
The dimension $n$ of the single-measurement data equals to the dimension of the trace of $u$.

In fact, most of the thoroughly studied inverse boundary value problems are overdetermined. 
Calderon's inverse problem is overdetermined in dimensions $n \geq 3$, see \cite{Sylvester1987}
for the isotropic and \cite{Lee1989, Lassas2001, Lassas2003a, DosSantosFerreira2009}
for the anisotropic case. Likewise, the inverse boundary value problems 
for the wave, heat, and the dynamical Schr\"odinger equations with 
Dirichlet-to-Neumann map as data are all equivalent with the Gel'fand's inverse problem 
\cite{Katchalov2001}, and they are overdetermined in dimensions $n\geq 2$.
However, the two dimensional Calderon's inverse problem, see \cite{Nachman1996, Astala2006, Bukhgeim2008}
for the isotropic and \cite{Sylvester1990, Astala2005, Guillarmou2011} for the anisotropic case, is an 
example of a formally determined inverse problem, that is, the dimension of the data equals to that of the unknown.

In \cite{Helin2010}, we solved a formally determined inverse problem for the wave equation with a single measurement.
Although satisfactory in terms of the dimensions, the result \cite{Helin2010}
relies on the use of a source $F = F_\delta$ given as a weighted sum of point sources, 
\begin{align*}
F_{\delta}(t,x) = \sum_{j=1}^\infty 2^{-2^{j}} \delta_{x_j}(x)\delta(t).
\end{align*}
As the weights vanish superexponentially, the source $F_\delta$ may be hard to realize 
in practice with sufficient precision. On the other hand, random noise sources, as considered in the present paper,
appear in many applications. In seismology, cross-correlations of signal amplitudes generated by ambient seismic noise source are used to study travel times inside the Earth \cite{dehoop_etal_06, GP09}. 
Moreover, in one-dimensional radar imaging models, 
white noise signals are considered to be optimal sources when imaging a
stationary scatterer \cite{Toomay2004}.
Such models correspond mathematically to the one-dimensional deconvolution problem, 
and the present problem can be seen as a multi-dimensional analogue that is translation invariant in only one direction. 

Although we are ultimately concerned of (\ref{eq:problem}) with 
just a single realization of the white noise as the source,
we consider (\ref{eq:problem}) to be a stochastic partial differential equation.
In particular, we show that the pair $(F, u)$ on $(0,\infty) \times \p M$
is a Gaussian random variable and study its ergodicity properties. 
The literature on the stochastic hyperbolic equation and the direct problem is extensive. To our knowledge, the earliest existence and uniqueness results were given in \cite{Carmona88a} for a one-dimensional setting. The research has then extended to higher dimensions and more generalized setting (e.g. \cite{Peszat1, Ond2}), geometrical wave equations \cite{Ond1} and to non-gaussian sources \cite{Nualart}, to name a few directions. 
The pathwise properties have been studied in e.g. \cite{SanzSole1, SanzSole2}. In our problem formulation we have a white noise source which is supported on the boundary of a manifold. Closely related results with boundary supported white noise have been introduced in the work by Dalang and L{\'e}v{\^e}que \cite{DalangLeveque2004, DalangLeveque2006}. For hyperbolic equations with random boundary conditions, see \cite{Peszat2}.

Inverse problems related to stochastic wave equations have been mostly studied in the framework of random media. For this imaging setting we refer to \cite{Fouque_etal, Borcea_etal, BalRyzhik03} and the extensive research by their authors. 
Let us also mention the interesting approaches to stochastic inverse problems taken in \cite{LPS2008, Stuart1, Stuart2}.
 
In addition to our approach, we are aware of two other methods to solve 
formally determined hyperbolic inverse problems.
First, the adaptation of the Gelfand-Levitan method to multidimensional problems,
see \cite{Rakesh2003, Rakesh2011},
assumes that the problem is close to being symmetric in all but one direction.
Second, the Carleman estimates based approach, see \cite{Bukhgeuim1981,Isakov2006,Imanuvilov2003,Bellassoued2008,Stefanov2011},
assumes that the initial data is non-zero and satisfies certain conditions.

\section{Statement of the results}

Let us begin by introducing the scattering relation. 
Below, the tangent space of $M$ is denoted by $TM$ and $\dot \gamma$ denotes 
the tangent vector of a smooth curve $\gamma : [a,b] \to M$. We set
$SM=\{(x,\xi)\in TM;\ |\xi|_g=1\}$ to be the unit sphere bundle on $M$ and write
\begin{equation*}
\p_\pm SM=\{(x,\xi)\in SM;\ x\in \p M,\ \pm(\nu,\xi)>0\} 
\end{equation*}
where $\nu$ is the interior normal vector of $\p M$. 
Further, we denote by $\gamma(t;x,\xi)$ the geodesic with initial data $(x,\xi)\in TM$ and
we set 
\begin{align*}
\tau(x,\xi) = \inf \{ t\in (0, \infty] ; \gamma(t;x,\xi) \notin  M \}.
\end{align*}
We say that $M$ is trapping if the set in the definition of $\tau$ is 
empty for some $(x,\xi) \in TM$.
We make the standing assumption on the metric $g$:
\begin{itemize}
	\item[(A1)] 
	All geodesics $\gamma$ of $(\R^n, g)$ move eventually off to infinity, that is, 
	$|\gamma(t)| \to \infty$ as $t \to \infty$.
\end{itemize}
The assumption (A1) implies, in particular, that $M$ is non-trapping. 
Then the scattering relation of $(M,g)$ is defined by 
\begin{align*}
&\Sigma_{M,g}: \p_+ SM \to (0, \infty) \times \p_- SM,
\\&
\Sigma_{M,g}(x,\xi)=(\tau(x,\xi), \gamma(\tau(x,\xi);x,\xi),\dot \gamma(\tau(x,\xi);x,\xi))
\end{align*}
We write  $\Sigma=\Sigma_{M,g}$ when considering a fixed Riemannian manifold. 

Let us now consider the equation (\ref{eq:problem}) more carefully.
Let $(\Omega, \mathcal F, {\mathbb P})$ be a complete probability space and 
let $W$ be a random variable with the Gaussian white noise distribution 
supported on $\R \times \partial M$.
For a construction of the distribution, see Appendix \ref{sec:appendix}.
To avoid technicalities arising from compatibility conditions between the source term and the vanishing initial conditions we consider the stochastic wave equation 
\begin{align}
\label{wave_eq_with_compatibility_intro}
&\p_t^2 u - \Delta_g u = \chi_+  W \quad \text{on $(0, \infty) \times \R^n$},
\\
&u|_{t = 0} = \p_t u|_{t=0} = 0,\nonumber
\end{align}
where $\chi_+ \in C^\infty(\R)$ is a fixed cut-off satisfying $\chi_+ = 0$ 
and $\chi_+ = 1$ in neighborhoods of $(-\infty,0]$ and $[1,\infty)$, respectively.

Notice that the wave front set $WF(u)$ of the solution of (\ref{wave_eq_with_compatibility_intro})
intersects the conormal bundle of $(0,\infty)\times \p M$ almost surely
since the support of a realization of $W$ coincides with $\R \times \p M$ 
almost surely.
Thus the trace $u|_{(0,\infty) \times \p M}$ can not be defined 
in the sense of distributions. Instead, in Section \ref{subsec:traces}
we define the trace in a scattering sense. 
Let us suppose that $K \subset M^\inter$ is compact and that $g|_{\R^n \setminus K}$ is known.
We choose a smooth Riemannian metric tensor $g_0$ such that $g_0 = g$ in $\R^n \setminus K$
and consider the wave equation
\begin{align*}
&\p_t^2 u_{in} - \Delta_{g_0} u_{in} = \chi_+  W(\omega_0) \quad \text{on $(0, \infty) \times \R^n$},
\\
&u_{in}|_{t = 0} = \p_t u_{in}|_{t=0} = 0\nonumber
\end{align*}
for some fixed $\omega_0\in\Omega$, that is, $W(\omega_0)$ is a realization of the random process $W$.
The measurement operator $L_{M,g}$ is defined by
\begin{align*}
L_{M,g}( W(\omega_0)) = u_{sc}|_{(0,\infty) \times \p M}
\end{align*}
for the scattered wave $u_{sc} = u - u_{in}$.
We abbreviate $L=L_{M,g}$ when considering a fixed Riemannian manifold. 
The Sobolev regularity properties of the random variable $L( W)$
are reviewed in Section \ref{sec:whitenoise}.

To avoid technicalities, we make the following standing assumptions: 
\begin{itemize}
	\item[(A2)] $\p M$ is strictly convex with respect to the metric $g$ and
	\item[(A3)] $g$ coincides with the Euclidean metric outside a compact set. 
\end{itemize}
We believe that (A2) is not an essential assumption. In fact, it is not needed in our previous work 
\cite{Helin2010}.
We are now ready to formulate our main result.



\begin{theorem}
\label{thm_main}
Let $M \subset \R^n$ be a compact domain with smooth boundary. Suppose that two smooth Riemannian metrics $g$ and $\tilde g$ 
satisfy the assumptions (A1)-(A3). 
Furthermore, let $K \subset M^\inter$ be compact
and assume that $g|_{\R^n \setminus K}=\tilde g|_{\R^n \setminus K}$. 
Then for almost every $\omega_0\in\Omega$, the identity
$L_{M,g}( W(\omega_0)) = L_{M, \tilde g}( W(\omega_0))$
implies that $\Sigma_{M,g} = \Sigma_{M, \tilde g}$.
\end{theorem}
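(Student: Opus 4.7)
The plan is to use time-ergodicity of the Gaussian white noise $W$ to show that a single realization almost surely determines the joint law of $(W, L_{M,g}(W))$, and hence the Schwartz kernel of $L_{M,g}$; microlocal propagation of singularities then identifies that kernel's wavefront set as (a lift of) the graph of $\Sigma_{M,g}$, giving the conclusion. Concretely, $W$ is stationary in $t$ and $L_{M,g}$ commutes with time translation up to the finite-time transient caused by $\chi_+$. Assumption (A1) rules out trapping in $(\R^n, g)$, so this transient leaves any compact set in bounded time and the pair $(W, L_{M,g}(W))$ restricted to $(T, \infty) \times \p M$ is asymptotically stationary as $T \to \infty$. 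Applying Birkhoff's ergodic theorem to bilinear functionals of this jointly Gaussian pair yields a full-measure set $\Omega_0 \subset \Omega$ on which the time-averaged empirical cross-correlations of $(W(\omega_0), L_{M,g}(W)(\omega_0))$ converge to the joint covariance; since $W$ has covariance $\delta$, this cross-covariance $\Ex[W(t,y)\, L_{M,g}(W)(s,z)]$ is precisely the Schwartz kernel of $L_{M,g}$. Hence for $\omega_0 \in \Omega_0$ the hypothesis $L_{M,g}(W(\omega_0)) = L_{M,\tilde g}(W(\omega_0))$ forces $L_{M,g} = L_{M,\tilde g}$ as operators.

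Next, since the common reference $g_0$ cancels in the subtraction defining the scattered operators, this equality upgrades modulo smooth errors to equality of the full outgoing Green's kernels of $\p_t^2 - \Delta_g$ and $\p_t^2 - \Delta_{\tilde g}$ restricted to $(\R \times \p M)^2$. By Hörmander's propagation-of-singularities theorem combined with Melrose--Sjöstrand analysis at the boundary (which is strictly convex by (A2)), a non-characteristic covector $(s, y, \eta) \in T^*(\R \times \p M)$ lifts to a null bicharacteristic that follows the geodesic with initial datum $(y,\xi) \in SM$, where $\xi$ is the unit lift of $\eta$; the bicharacteristic re-emerges at $(s + \tau(y,\xi), \gamma(\tau;y,\xi), \dot\gamma(\tau;y,\xi))$, producing a boundary-to-boundary singularity precisely at the image of $(y,\xi)$ under $\Sigma_{M,g}$, and analogously for $\tilde g$. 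Equality of wavefront sets therefore forces $\Sigma_{M, g} = \Sigma_{M, \tilde g}$ on all of $\p_+ SM$.

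The main obstacle, we expect, is the ergodic step: one must rigorously justify convergence of the empirical time-averages in a topology compatible with the low Sobolev regularity of $L_{M,g}(W)$ reviewed in Section~\ref{sec:whitenoise}, and quantitatively control the $\chi_+$-transient using finite speed of propagation and (A1). A secondary subtlety is the microlocal identification near $\p M$: glancing geodesics must be excluded, and (A2) is exactly the hypothesis that rules them out, leaving only transversal bicharacteristics that project cleanly onto geodesics through $M$.
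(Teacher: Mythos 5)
Your overall strategy---use ergodicity in time to turn the single realization into second-order statistics of the pair $(W,LW)$, then read the scattering relation off the singularities of the resulting kernel---shares the paper's philosophy, but both of your main steps have genuine gaps as written. The first concerns the ergodic step. Your claim that the transient caused by $\chi_+$ and the vanishing initial conditions ``leaves any compact set in bounded time'' is false in the relevant ($L^2$) sense: there is no strong Huygens principle for even $n$ or for variable coefficients, and waves scattered by the perturbation in $K$ linger near $\p M$ for all time. What (A1) and (A3) actually provide is Va{\u\i}nberg's \emph{local energy decay} (Theorem \ref{th_energy_decay}), only polynomial in even dimensions, and this quantitative decay is precisely the missing ingredient: it is what makes the time-translated bilinear functionals have summable-enough correlations. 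Because the process $(W,LW)$ is not stationary, Birkhoff's theorem does not apply off the shelf; the paper instead proves a strong law of large numbers by hand, combining a Gaussian fourth-moment bound (Lemma \ref{lem:aux_ineq2}), the correlation decay $\O(|j-k|^{-n+3/2})$ of translated blocks (Lemma \ref{lem:crosscorr_decay}, which rests on the energy decay), and Borel--Cantelli along the subsequence $N^3$ (Theorem \ref{thm:convergence}), followed by a countable-dense-set-plus-continuity argument (Theorem \ref{thm:unif}) to recover the whole correlation function rather than countably many pairings. You acknowledge this step as ``the main obstacle'' but do not supply the mechanism that closes it.

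The second gap is the microlocal identification. Asserting that the wavefront set of the boundary-restricted outgoing Green's kernel \emph{is} the graph of $\Sigma_{M,g}$ requires showing that the transmitted singularities actually appear---non-vanishing of the principal symbol on the flow-out Lagrangian, including past caustics, since $(M,g)$ is not assumed simple and geodesics may develop conjugate points inside $M$---and separating them from the direct and purely exterior singularities, which coincide for $g$ and $\tilde g$. This route is plausible but is a substantial piece of FIO analysis that you do not carry out. The paper deliberately avoids it: the Gaussian beam construction together with explicit stationary phase (Lemma \ref{lem_stationary_phase}, Theorem \ref{lem_pairing_asymptotics}) yields a concrete indicator $\beta(\pt,\pgb)$, nonzero exactly when the backward beam from $(z,\zeta)$ exits $M$ at the prescribed point, direction and time, and the correlated functional $(W,\chi_+ w_\epsilon)$ is shown to be determined by the data only through the integration-by-parts identity (\ref{pairing_Ww}) and an exterior propagation step. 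Your direct pairing of $LW$ against test functions is legitimate and would simplify that part, but it shifts the entire geometric burden onto the wavefront-set argument, which remains unproved in your proposal.
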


\subsection{Outline of the proof} Let us summarize how Theorem \ref{thm_main} is obtained. First, for an open set $B\subset \R^{k}$ denote the dual pairing of a generalized function $f\in \D'(B)$ and a test function $g \in C^\infty_0(B)$ by $(f,g)_{\D'\times C^\infty_0(B)}$.
Using integration by parts we show in Section \ref{subsec:integration_by_parts}
that for almost every $\omega_0$, the pairing
$(W(\omega_0), \chi_+ w)_{\D' \times C_0^\infty((0,T) \times \p M)}$
is determined by the data pair $(W(\omega_0), L W(\omega_0))$ for fixed $T > 0$  
and for such a smooth solution $w$ of $(\p_t^2 - \Delta_g) w = 0$ 
that the state $(w(T), \p_t w(T))$ is supported outside $M$.

We choose $w$ to be a Gaussian beam solution that is sent backwards in time from the exterior of $M$.
Section \ref{subsec:gaussianbeams_intro} is devoted to 
the introduction of Gaussian beams.
A backward Gaussian beam is concentrated on a geodesic $\gamma$ and is essentially determined by 
the end point $\gamma(T)$ and direction $\dot \gamma(T)$ and a scaling parameter $\epsilon > 0$.
We assume that a backward Gaussian beam $w_\epsilon$ enters in $M$ at time $T-r$ and write
$(x,\xi) = (\gamma(T-r),-\dot \gamma(T-r))$. 
In Section \ref{subsec:dual_pairing} we construct an oscillating test function $\psi_\epsilon$ that imitates
a Gaussian beam passing through a point $(y,\eta) \in \p_+ SM$ at time $s > 1$. 
The crux of the method lies in Theorem \ref{lem_pairing_asymptotics}, where we obtain the following asymptotics
\begin{equation}
	\label{intro:indicator}
\lim_{\epsilon\to 0}\; \epsilon^{-\frac n2}(\psi_\epsilon, w_\epsilon)_{L^2((0,T) \times \p M)}
\begin{cases}
\neq 0, & \textrm{if } (T-r-s,y,\eta) = \Sigma(x,\xi),\\
= 0, & {\rm otherwise.}
\end{cases}
\end{equation}

In Section \ref{sec:rvs_and_asymptotics} we consider the correlations of two random variables
$X_\epsilon = (W, \epsilon^{-n/4} \psi_\epsilon)$
and
$Y_\epsilon = (W, \epsilon^{-n/4} \chi_+ w_{\epsilon})$, and in Section \ref{sec:ergodicity}
we combine energy decay and ergodicity arguments to show that
\begin{equation}
	\lim_{N \to \infty} \frac{1}{N^3} \sum_{j=1}^{N^3} X_\epsilon^j  Y^j_\epsilon = \expec X_\epsilon Y_\epsilon = 
	\epsilon^{-\frac n2}(\psi_\epsilon, w_\epsilon)_{L^2((0,T) \times \p M)}
\end{equation}
for time-translated variables $X^j_\epsilon$ and $Y^j_\epsilon$.
Since the variables $X^j_\epsilon$ and $Y_\epsilon^j$ are determined by $(W,L W)$ almost surely, we find out if $w_\epsilon$ and $\phi_\epsilon$ coincide in the sense of equation \eqref{intro:indicator}.
By repeating the argument for a dense numerable set of initial data, we obtain the scattering relation by continuity results.

\section{The stochastic direct problem}
\label{sec:whitenoise}

\subsection{White noise and generalized solutions}

We recall that a random variable with the Gaussian white noise distribution 
supported on $\R \times \p M$ can be defined in the local Sobolev spaces,
\begin{align*}
W : \Omega \to H_{loc}^{-(n+1)/2-\epsilon}(\R^{1+n}),
\end{align*}
where $\epsilon > 0$, see Appendix \ref{sec:appendix} for more details. 
The characterizing property of the white noise on $\R \times \p M$
is the following 
equation that holds for any $\phi, \psi \in C_0^\infty(\R^{1+n})$ 
\begin{equation}
\label{the_correlations}
\expec (W, \phi)_{\D' \times C^\infty_0(\R^{1+n})}(W, \psi)_{\D' \times C^\infty_0(\R^{1+n})}
= (\phi, \psi)_{L^2(\R \times \partial M)}.
\end{equation}
Here and throughout the paper we are using real inner products.

Let us recall that the equation
\begin{align}
\label{wave_eq_with_compatibility}
&\p_t^2 u - \Delta_g u = \chi_+ f \quad \text{on $(0, \infty) \times \R^n$},
\\
&u|_{t = 0} = \p_t u|_{t=0} = 0,\nonumber
\end{align}
has a unique solution 
$u \in \D'((0, \infty) \times \R^n)$ for all $f \in  \D'((0, \infty) \times \R^n)$,
see e.g. \cite[Lem. 5.1.5]{Duistermaat1996} for uniqueness.
The existence follows by transposing the smooth case, see e.g. \cite[Th. 7.2.7]{Evans1998}.
Moreover, the parametrix construction \cite[Th. 26.1.14]{Hormander1985a} implies that 
the solution map ${\mathcal K}_g: f \mapsto u$ 
is continuous
\begin{equation}
\label{first_continuity_of_sol_operator}
{\mathcal K}_g : H_{loc}^{s}((0,\infty)\times \R^n) \to H_{loc}^{s+1}((0,\infty)\times \R^n),
\quad s \in \R.
\end{equation} 

Due to \cite[Prop. 3.7.2]{Bogachev} we have that
\begin{equation}
	U = {\mathcal K}_g W : \Omega \to H_{loc}^{-(n-1)/2-\epsilon}((0,\infty) \times \R^n)
\end{equation}
is a well-defined Gaussian random variable.
Moreover, $U$ satisfies
\begin{align}
\label{wave_eq_with_randomness}
&\p_t^2 U - \Delta_g U = \chi_+  W \quad \text{on $(0, \infty) \times \R^n$},
\\
&U|_{t = 0} = \p_t U|_{t=0} = 0.\nonumber
\end{align}
almost surely in the sense of distributions. 

We point out that the solution $U$ can be shown to have stronger pathwise properties, 
see e.g. \cite{DalangLeveque2004}. 
However, such results are not crucial in this treatise, whereas the random variable formalism provides us some flexibility for the analysis of the inverse problem. 

\subsection{Trace of the solution in the scattering sense}
\label{subsec:traces}

\def\Tr{{\rm Tr}}
In below, we will use regularity properties of the traces 
\begin{align*}
\Tr_{\p M} {\mathcal K}_g f = u|_{(0, \infty) \times \p M},
\quad \text{and} \quad \Tr_\Omega^T {\mathcal K}_g f = u|_{\{T\} \times \Omega},
\end{align*}
where $T > 0$ and $\Omega \subset \R^n$ is open.
The above traces are not well defined in the sense of distributions 
if the wavefront set $WF(f)$ intersects the conormal bundles of 
the sets 
\begin{align}
\label{trace_surfaces}
(0,\infty)\times \p M, \quad \{T\} \times \Omega.
\end{align}
However, the traces are well defined by \cite[Th. 26.1.14]{Hormander1985a}
if $\supp(f)$ does not intersect the sets (\ref{trace_surfaces}).

Let $B \subset \R^n$ be open and let us define the closed subspace  
\begin{align*}
H_{locc}^s((0,\infty)\times B) \subset H_{loc}^{s}((0,\infty)\times \R^n)
\end{align*}
consisting of distributions supported in $[0,\infty] \times \overline B$.
If $\overline B \cap \overline \Omega = \emptyset$, then we have the regularity
\begin{align}
\label{trace_T}
\Tr_\Omega^T {\mathcal K}_g : 
H_{locc}^s((0,\infty)\times B) \to H_{loc}^{s+1}(\Omega),
\quad s \in \R.
\end{align}
By \cite{Hormander1971} the mapping $\Tr_\Omega^T {\mathcal K}_g$
is a Fourier integral operator of order $-5/4$ with the canonical relation $C$
consisting of the points 
\begin{align*}
&(t, x, |\xi|, \xi, \gamma(T-t; x, \hat \xi), \dot \gamma(T-t; x, \hat \xi)),
\quad (x,\xi) \in T^* B \setminus 0,\ t > 0,
\end{align*}
such that $\gamma(T-t; x, \hat \xi) \in \Omega$.
Here $\hat \xi = \xi / |\xi|$ 
and we have identified the cotangent and the tangent space using the metric $g$.
In particular, the canonical relation is parametrized by 
$(t, x, \xi)$ and the projection $C \to T^* (0,\infty)\times B$ has the rank $2(n+1) - 1$.
We may apply \cite[Th. 4.3.2]{Hormander1971} to get the continuity (\ref{trace_T}).

Analogously, if $\overline B \cap \p M = \emptyset$
and $\p M$ is strictly convex with respect to the metric $g$, then we have the regularity
\begin{align}
\label{trace_boundary}
\Tr_{\p M} {\mathcal K}_g : 
H_{locc}^s((0,\infty)\times B) \to H_{loc}^{s+1}((0,\infty)\times \p M),
\quad s \in \R.
\end{align}
Indeed, $\Tr_{\p M} {\mathcal K}_g$
is a Fourier integral operator of order $-5/4$ with the canonical relation 
consisting of the points 
\begin{align*}
&(t, x, |\xi|, \xi, T, \gamma(T-t; x, \hat \xi), |\xi|, \dot\gamma^\t(T-t; x, \hat \xi)),
\end{align*}
such that $\gamma(T-t; x, \hat \xi) \in \p M$, $(x,\xi) \in T^* B \setminus 0$
and $t, T > 0$.
Here $v \mapsto v^\t$ is the projection $T^* \R^n \to T^* \p M$.
The strict convexity of $\p M$ implies that $T$ is locally a function of 
$(t, x, \hat \xi)$. As above, the continuity (\ref{trace_boundary}) follows from \cite[Th. 4.3.2]{Hormander1971}.
The convexity assumption is not essential here
since we could use the result \cite{Greenleaf1994} as in \cite{Tataru1998}.
However, we will use the strict convexity assumption also for other purposes in below.


Let $K \subset M^\inter$ be compact and let $g_0$ be a smooth Riemannian metric tensor 
such that $g_0 = g$ in $\R^n \setminus K$.
We consider the incoming wave defined by $u_{in} = {\mathcal K}_{g_0} f$. 
Then the scattered wave $u_{sc} = u - u_{in}$ satisfies
\begin{equation}
\label{scattering_sol}
u_{sc} = ({\mathcal K}_g 
- {\mathcal K}_{g_0})  f =
{\mathcal K}_g \tilde f
\end{equation}
where $\tilde f = (\Delta_{g} - \Delta_{g_0})
u_{in}$ is supported in $(0, \infty) \times K$. 
Thus the measurement operator 
\begin{align}
\label{measurement_op}
Lf = {\rm Tr}_{\p M} {\mathcal K}_g 
(\Delta_{g} - \Delta_{g_0}) {\mathcal K}_{g_0} f =
u_{sc}|_{(0,\infty) \times \p M}
\end{align}
is continuous 
\begin{align*}
L : H_{locc}^s((0,\infty)\times B) \to H_{loc}^{s}((0,\infty)\times \p M),
\end{align*}
where $B \subset \R^n$ is a neighborhood of $\p M$ satisfying $K \cap \overline B = \emptyset$. 

From these considerations it follows that 
we can factorize 
\begin{equation}
\label{eq:Ufact}
U = U_{in} + U_{sc}, \quad \textrm{almost surely,} 
\end{equation}
where
$U_{in} = {\mathcal K}_{g_0}  W$ and $U_{sc} = {\mathcal K}_g
(\Delta_{g} - \Delta_{g_0}) {\mathcal K}_{g_0}  W$.
Moreover, the measurement 
\begin{equation*}
	L W : \Omega \to H_{loc}^{-(n+1)/2-\epsilon}((0,\infty)\times \p M)
\end{equation*} 
is a well-defined Gaussian random variable.

\subsection{Integration by parts}
\label{subsec:integration_by_parts}

Following \cite{Helin2010} we
have for a smooth source $f \in C_0^\infty((0, \infty) \times \R^n)$ and $T > 0$ that
\begin{align}
\label{integration_by_parts}
&(\chi_+ f, w)_{L^2((0, T) \times M)} 
\\\nonumber&\quad= 
(\p_t u(T), w(T))_{L^2(\R^n)} - (u(T), \p_t w(T))_{L^2(\R^n)},
\end{align}
where $u$ is the solution of (\ref{wave_eq_with_compatibility})
and $w \in C^\infty([0,T] \times \R^n)$
satisfies the wave equation $(\p_t^2 - \Delta_g) w = 0$. 
We let 
\begin{align}
\label{support_of_test_function} 
w(T), \p_t w(T) \in C_0^\infty(\R^n \setminus M) 
\end{align}
and denote $\Omega = \supp(w(T)) \cup \supp(\p_t w(T))$.
Let $B \subset \R^n$ be a neighborhood of $\p M$ such that $\overline B \cap \Omega = \emptyset$.
The density of the embedding 
\begin{align*}
C_0^\infty((0, \infty) \times B) \subset H_{locc}^s((0,\infty)\times B)
\end{align*}
and the continuity (\ref{trace_T}) imply that the identity
\begin{align}
\label{random_integration_by_parts}
&(\chi_+ W, w)_{\D' \times C_0^\infty((0, T) \times \p M)} 
\\\nonumber&\quad= 
(\p_t U(T), w(T))_{\D' \times C_0^\infty(\R^n)} - (U(T), \p_t w(T))_{\D' \times C_0^\infty(\R^n)},
\end{align}
holds almost surely.


Now, we insert \eqref{eq:Ufact} to the right-hand side of (\ref{integration_by_parts}) whence it splits into four terms.
The two terms corresponding to $U_{in}$ are almost surely determined by $W$ and $g_0$.
We show next that the traces ${\rm Tr}^T_{\Omega} \p^j_t U_{sc}$, $j=0,1$
are almost surely determined from the solutions of the exterior problem
\begin{align}
\label{exterior_problem}
&\p_t^2 v - \Delta_g v = 0, \quad \text{in $(0, T)\times \R^n \setminus M$,}
\\\nonumber
&v|_{(0,T) \times \p M} = h,
\\\nonumber
&v|_{t = 0} = 0, \quad \p_t v|_{t = 0} = 0.
\end{align}

Let us write ${\mathcal K}_{ex} : h \mapsto v$ 
and recall, see e.g. \cite{Helin2010}, that the maps 
${\rm Tr}^T_{\Omega} \p^j {\mathcal K}_{ex}$, $j=0,1$, are continuous
\begin{align*}
\E'((0,T) \times \p M) \to \D'(\Omega).
\end{align*}
Notice that $U_{sc}|_{(0,\infty) \times \p M}$ vanishes almost surely 
for small $t > 0$.
Moreover, we may introduce a cut-off function $\chi_T \in C^\infty(0,T)$
satisfying $\chi_T = 0$ near $t=T$ and $\chi_T = 1$ away from a neighborhood of $T$.
By choosing the neighborhood small enough, we have by finite speed of propagation that 
${\rm Tr}^T_{\Omega} \p^j_t {\mathcal K}_{ex} h = {\rm Tr}^T_{\Omega} \p^j_t {\mathcal K}_{ex} (\chi_T h)$, $j = 0,1$, 
since the distance between $\Omega$ and $\p M$ is strictly  positive.
Hence the traces of the scattered waves are determined as
\begin{equation}
	{\rm Tr}^T_{\Omega} \p^j_t U_{sc} =
	{\rm Tr}^T_{\Omega} \p^j_t {\mathcal K}_{ex} (\chi_T L W),
	\quad j=0,1,
\end{equation}
almost surely.

In particular, a realization of the measurement $L W$
almost surely determines the realization of the distribution pairing via
\begin{multline}
\label{pairing_Ww}
(W, \chi_+ w)_{\D' \times C_0^\infty((0, T) \times \p M)} \\
=  (\p_t U_{in}(T) + {\rm Tr}^T_{\Omega} \p_t {\mathcal K}_{ex} (\chi_T L W), w(T))_{\D' \times C_0^\infty(\R^n)} 
\\
- (U_{in}(T) + {\rm Tr}^T_{\Omega} {\mathcal K}_{ex} (\chi_T L W), \p_t w(T))_{\D' \times C_0^\infty(\R^n)},
\end{multline}
where $w$ satisfies the wave equation $(\p_t^2 - \Delta_g) w = 0$
with the initial conditions as in (\ref{support_of_test_function}).

\section{Gaussian beams}
\label{sec:gaussianbeams}

\subsection{Gaussian beam solutions}
\label{subsec:gaussianbeams_intro}
In what follows we will choose the test function $w$ in (\ref{pairing_Ww})
to be a Gaussian beam solution to the wave equation $(\p_t^2 - \Delta_g) w = 0$.
A formal Gaussian beam of order $N_U \in \N$ propagating on a geodesic $\gamma_U$ is a function of the form 
\begin{align*}
U_{\epsilon}(t,x) = 
e^{i\epsilon^{-1}\theta(t,x)}
\sum _{n=0}^{N_U} \epsilon^n u_n(t,x), \quad t\in \R,\ x \in \R^n.
\end{align*}
Here the phase function $\theta$ is of the form 
\begin{align}
\label{def_theta}
\theta(t, x) &= p(t) (x - \gamma_U(t)) + \frac{1}{2} (x - \gamma_U(t)) H(t) (x - \gamma_U(t)) 
\\\nonumber&\quad\quad+ \sum_{2 < |\alpha| \le N} \frac{\theta_\alpha(t)}{\alpha !} (x - \gamma_U(t))^\alpha,
\end{align}
where all the coefficients are smooth functions and, in particular, 
$p$ is the covariant representation of the velocity $\dot \gamma_U$, 
that is, 
$p = \dot \gamma_U^\flat$, 
and $H$ is a symmetric matrix with positive definite imaginary part.
Moreover, the principal amplitude function $u_0$ satisfies $u_0(t, \gamma_U(t)) \ne 0$.

Let $T > 0$. Then the phase and the amplitude functions are constructed so that there is a neighborhood $V_U$ of the trajectory
\begin{align*}
\{ (t, \gamma_U(t));\ t \in [0, T] \} \subset \R \times \R^n
\end{align*}
and $C > 0$ such that  
\begin{align*}
&\norm{(\p^2_t - \Delta_g) U_\epsilon}_{C^k(\bar{V_U})} \le C \epsilon^{N_U-k}, \quad k < N_U.
\end{align*}
See \cite[Cor. 2.63]{Katchalov2001} for a proof of this estimate and \cite{Babich1972, Babich1985, Babich1981, Ralston1982} for earlier references of Gaussian beams. Moreover, we may choose $V_U$ so that there is $\beta_\theta > 0$ satisfying
\begin{align}
\label{im_theta_bound}
&\Im \theta(t, x) \ge \beta_\theta d^2(x, \gamma(t)), \quad (t, x) \in \bar{V_U}.
\end{align}

Notice that $H(t)$ is non-degenerate since 
\begin{align}
\label{assumption_H}
\Im H(t) = \frac{H(t) - \bar{H(t)}}{2i} > 0.
\end{align}
Indeed, by symmetry of $H(t)$ we have for nonzero $\xi \in \C^n$ that
\begin{align*}
\Im (\xi \bar{H(t) \xi} )
&= 
\frac{1}{2i} (\xi \bar{H(t) \xi}  - \bar{\xi} H(t) \xi)
= 
\frac{1}{2i} (\xi \bar{H(t) \xi}  - \xi H(t) \bar{\xi})
\\&= 
\xi(\Im H(t)) \bar{\xi}   > 0,
\end{align*}
whence $H(t) \xi \ne 0$.

A Gaussian beam solution is given by the following theorem, see \cite[Th. 2.64]{Katchalov2001} 
for a proof. 

\begin{theorem}
\label{th_from_gb_to_sol}
Let $T > 0$, $\chi_U \in C_0^\infty(V_U)$ and 
consider the wave equation 
\begin{align*}
&\p_t^2 w_\epsilon - \Delta_g w_\epsilon = 0 \quad \text{on $(-\infty, T) \times \R^n$},
\\&w_\epsilon(T) = (\chi_U U_\epsilon)(T),\ 
\p_t w_\epsilon(T) = \p_t(\chi_U U_\epsilon)(T).
\end{align*}
Then there is $C > 0$ such that
\begin{align*}
\norm{w_\epsilon - \chi_U U_\epsilon}_{C^k([0, T] \times \R^n)} \le C \epsilon^{N_U-k}, \quad k < N_U.
\end{align*}
\end{theorem}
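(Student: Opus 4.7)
The argument is the standard remainder-plus-energy estimate for Gaussian beam solutions. Set $r_\epsilon = w_\epsilon - \chi_U U_\epsilon$. Then $r_\epsilon$ solves the backward Cauchy problem
\begin{align*}
(\p_t^2 - \Delta_g) r_\epsilon = F_\epsilon, \qquad r_\epsilon(T) = \p_t r_\epsilon(T) = 0,
\end{align*}
where
\begin{align*}
F_\epsilon = -\chi_U (\p_t^2 - \Delta_g) U_\epsilon - [\p_t^2 - \Delta_g, \chi_U]\, U_\epsilon.
\end{align*}
First I would bound $F_\epsilon$ uniformly in $\epsilon$. The first summand is controlled in $C^k$ by $C \epsilon^{N_U-k}$ directly from the formal beam estimate recalled just before the theorem, since $\chi_U \in C_0^\infty(V_U)$. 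For the commutator one arranges $\chi_U$ to equal one on a neighborhood of the trajectory $\{(t,\gamma_U(t)) : t \in [0,T]\}$, so that $\nabla_{t,x}\chi_U$ is supported in a closed subset of $V_U$ bounded away from the trajectory by some $\delta > 0$. On such a set the Gaussian decay \eqref{im_theta_bound} gives $|e^{i\theta/\epsilon}| \le e^{-\beta_\theta\delta^2/\epsilon}$ and analogous super-polynomial bounds for derivatives, so the commutator contribution is $O(\epsilon^N)$ for every $N$ and is absorbed into $C\epsilon^{N_U-k}$. Hence $\|F_\epsilon\|_{C^k} \le C \epsilon^{N_U - k}$.

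Next I would turn this source bound into a $C^k$ bound on $r_\epsilon$ via energy estimates. Finite speed of propagation and the compact support of $F_\epsilon$ keep $r_\epsilon(t,\cdot)$ supported in a fixed compact set for $t \in [0,T]$, so local and global Sobolev norms agree. The time-reversed $L^2$ energy inequality gives
\begin{align*}
\|r_\epsilon(t)\|_{H^1} + \|\p_t r_\epsilon(t)\|_{L^2} \le C \int_t^T \|F_\epsilon(s)\|_{L^2}\, ds \le C(T)\, \epsilon^{N_U}.
\end{align*}
To pass to $C^k$ I would apply the same estimate to each derivative $\p^\alpha r_\epsilon$ with $|\alpha| \le k + m$, using that for smooth $g$ the commutator $[\p^\alpha, \p_t^2 - \Delta_g]$ is a differential operator of order $|\alpha|+1$ with smooth coefficients, which lets one iterate the energy inequality at any Sobolev level with sources bounded by $\|F_\epsilon\|_{H^{k+m-1}} \le C\epsilon^{N_U - k - m + 1}$. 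Choosing $m > n/2$ and invoking the Sobolev embedding $H^{k+m} \hookrightarrow C^k$ yields $\|r_\epsilon\|_{C^k([0,T]\times\R^n)} \le C\epsilon^{N_U - k - m_0}$ for a fixed offset $m_0$ depending only on $n$. The sharp exponent $N_U - k$ is then recovered by enlarging $N_U$ in the formal Gaussian beam construction by $m_0$ at the outset, which is permitted since the ansatz can be built to any prescribed order.

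The main obstacle I anticipate is the clean bookkeeping between the two scales of regularity involved: the formal beam error is naturally small in $C^k$, whereas the energy method naturally lives in Sobolev spaces, and the Sobolev embedding step costs a fixed number of derivatives. The cleanest workaround is the pre-adjustment of $N_U$ above; a more refined alternative is to obtain the literal exponent $N_U - k$ directly using adapted commuting vector fields tangent to $\gamma_U$, but this is considerably more technical. A secondary but essential point is that the commutator term be exponentially small, which requires $\chi_U \equiv 1$ near the trajectory; without this the theorem would fail, and it is this arrangement that is implicitly built into the statement via the compatibility between $V_U$, the Gaussian phase bound \eqref{im_theta_bound}, and the choice of $\chi_U$.
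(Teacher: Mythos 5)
The paper gives no proof of this statement; it is quoted from \cite[Th. 2.64]{Katchalov2001}, and your argument --- the remainder $r_\epsilon$ solving the backward problem with vanishing Cauchy data at $t=T$, the $\O(\epsilon^{N_U-k})$ bound on $\chi_U(\p_t^2-\Delta_g)U_\epsilon$ from the estimate preceding the theorem, the $\O(\epsilon^N)$ commutator term obtained from \eqref{im_theta_bound} once $\chi_U\equiv 1$ near the trajectory, and higher-order energy estimates closed by Sobolev embedding --- is precisely the standard proof found there, and your identification of the two delicate points (the cutoff must be constant near $\gamma_U$; the embedding costs derivatives) is accurate. The one step I would not accept as written is the claim that pre-enlarging $N_U$ by $m_0$ recovers the literal exponent $N_U-k$: if $\tilde U_\epsilon$ denotes the order-$(N_U+m_0)$ extension and $\tilde w_\epsilon$ the corresponding exact solution, the term $\|w_\epsilon-\tilde w_\epsilon\|_{C^k}$ is governed by the data difference $\chi_U e^{i\theta/\epsilon}\sum_{n>N_U}\epsilon^n u_n$, which is only $\O(\epsilon^{N_U+1-j})$ in $C^j$ no matter how large $m_0$ is, and propagating it still goes through the same lossy energy-plus-embedding route; the triangle inequality therefore leaves a fixed dimensional deficit. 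This is harmless for the paper --- the Gaussian concentration gains an extra $\epsilon^{n/4}$ in every $L^2$-based norm, and the estimate is only ever invoked with $N_U$ a free parameter taken at least $n/4+1$ --- but to obtain the exponent exactly as stated one needs either the adapted commuting vector fields you mention or the bookkeeping of the cited reference, not the enlargement trick alone.
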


We will also need to estimate $w_\epsilon|_{(-\infty, -S) \times \p M}$ for $S > 0$.
This is obtained from an energy decay estimate by Va{\u\i}nberg \cite{Vauinberg1989}
that follows from the assumptions (A1) and (A3).

\begin{theorem}
	\label{th_energy_decay}
	Let $\hat M \subset \R^n$ be a compact set.	
	There is $T = T_{ED} > 0$ such that if $w$ is a smooth solution to the wave equation 
	\begin{align*}
		&\p_t^2 w - \Delta_g w = 0 \quad \text{on $(-\infty, T) \times \R^n$},
	\end{align*}
	satisfying ${\rm supp} (w(T)) \cup {\rm supp} (\p_t w(T)) \subset \hat M$, then for all $S > 0$
	\begin{equation}
		\left\|w\right\|^2_{L^2((-\infty,-S) \times \p M)} \leq C \eta(S)\left(\norm{w(T)}_{L^2(\hat M)}^2 + \norm{\p_t w(T)}_{L^2(\hat M)}^2\right),
	\end{equation}
	where
	\begin{equation}
		\eta(S) = 
		\begin{cases}
			S^{-2n+3} & \textrm{for even } n, \\
			\exp(-\delta S) & \textrm{for odd } n.
		\end{cases}
	\end{equation}
	Above, $C, \delta > 0$ are constants which depend only on $\hat M$ and on the Riemannian manifold $(\R^n,g)$.
\end{theorem}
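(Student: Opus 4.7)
The plan is to reduce the bound to a standard local-energy decay estimate for the time-reversed wave equation and then appeal to Vainberg \cite{Vauinberg1989}. First I set $v(s,x) = w(T-s,x)$, so that $v$ solves the free wave equation $(\p_s^2 - \Delta_g) v = 0$ with Cauchy data $v(0) = w(T)$ and $\p_s v(0) = -\p_t w(T)$, both supported in $\hat M$. A change of variable converts the left-hand side into
\[
\|w\|_{L^2((-\infty,-S)\times \p M)}^2 = \int_{S+T}^\infty \|v(s,\cdot)\|_{L^2(\p M)}^2\, ds,
\]
so it suffices to bound the time-integrated boundary $L^2$-norm of $v$ by $\eta(S)$ times the initial energy.

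Next, I fix a cutoff $\chi \in C_0^\infty(\R^n)$ with $\chi = 1$ on $\hat M$ and on a neighborhood $V$ of $\p M$. The trace inequality yields $\|v(s,\cdot)\|_{L^2(\p M)}^2 \le C \|\chi v(s)\|_{H^1(\R^n)}^2$, so it suffices to control the localised energy. Under (A1), every geodesic in $(\R^n,g)$ escapes to infinity, and under (A3) the metric is Euclidean outside a compact set, so $(\R^n,g)$ is a non-trapping, compactly supported perturbation of Euclidean space. These are exactly the hypotheses of Vainberg's local-energy decay theorem, which produces a threshold $s_0 > 0$ and a constant $C$ such that, for $s \ge s_0$,
\[
\|\chi v(s)\|_{H^1(\R^n)}^2 + \|\chi \p_s v(s)\|_{L^2(\R^n)}^2 \le C \eta_1(s) \left(\|v(0)\|_{L^2(\hat M)}^2 + \|\p_s v(0)\|_{L^2(\hat M)}^2\right),
\]
with $\eta_1(s) = e^{-\delta s}$ for odd $n$ and $\eta_1(s) = s^{-2n+2}$ for even $n$, which are the standard Vainberg rates for non-trapping perturbations of Euclidean space in the respective parity.

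Finally, I choose $T_{ED} \ge s_0$ and integrate the last display over $s \in (S+T, \infty)$. In the odd case, $\int_{S+T}^\infty e^{-\delta s}\, ds \le \delta^{-1} e^{-\delta S}$; in the even case with $n \ge 2$, $\int_{S+T}^\infty s^{-2n+2}\, ds \le (2n-3)^{-1} S^{-2n+3}$. Together with the trace estimate this delivers the claimed bound with $\eta(S)$ as stated. The genuine content of the theorem lies entirely in Vainberg's resolvent analysis on non-trapping exteriors, which powers the exponential-versus-polynomial dichotomy; the remaining effort is essentially bookkeeping, and the only subtle point is ensuring that the polynomial Vainberg exponent for even $n$ is precisely $-(2n-2)$ so that its time integral reproduces the advertised rate $S^{-2n+3}$.
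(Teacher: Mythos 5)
Your argument is correct and is essentially the paper's: both reduce the claim to Va{\u\i}nberg-type local decay for non-trapping, compactly supported perturbations of the Euclidean metric (the paper quotes it as a pointwise bound $|w(x,-t)| \le C\eta'(t)(\|w(T)\|_{L^2} + \|\p_t w(T)\|_{L^2})$ with $\eta'(t) = t^{-n+1}$ for even $n$, via \cite[Th. 2.104]{EgorovShubin}) and then integrate in time over $(-\infty,-S)$. Your only deviation is routing through the localized $H^1$ energy plus a trace inequality instead of the sup-norm estimate on a compact set containing $\p M$, which yields the same squared rate $t^{-2n+2}$ and hence the same $S^{-2n+3}$ after integration.
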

\begin{proof}
According to \cite[Th. 2.104]{EgorovShubin} we have 
for some $T > 0$ and $C > 0$ the estimate
	\begin{equation}
		\left| w(x, -t)\right| \leq C \eta'(t)\left(\norm{w(T)}_{L^2(\hat M)} + \norm{\p_t w(T)}_{L^2(\hat M)}\right),
	\end{equation}
	where $x \in \hat M$, $t>0$ and
	\begin{equation}
		\eta'(t) = 
		\begin{cases}
			t^{-n+1} & \textrm{for even } n, \\
			\exp(-\delta t) & \textrm{for odd } n.
		\end{cases}
	\end{equation}
The claim follows after integration.
\end{proof}

\subsection{Dual pairing of beam-like functions}
\label{subsec:dual_pairing}

In the following we study dual pairing of Gaussian beams with oscillating functions having a phase of the form
\begin{align}
\label{eq:beamlikephase}
\theta(t, x) &= p(t) (x - \gamma(t)) + \frac{1}{2} (x - \gamma(t)) H(t) (x - \gamma(t)) 
\\\nonumber&\quad\quad+ \O(|x - \gamma(t)|^3).
\end{align}
where $\gamma$ and $p$ are smooth paths in $\R^n$ and $H \in \C^{n \times n}$ is symmetric.
Moreover, we assume that there is $t_0 \in (0, T)$ such that
\begin{itemize}
\item[(B1)] $\gamma(t_0) \in \p M$ and
$\dot \gamma(t_0)  \notin T \p M$,
\item[(B2)] $p(t_0) = \dot \gamma(t_0)^\flat$ and
\item[(B3)] $\Im H(t_0)$ is positive definite.
\end{itemize}
We employ the method of stationary phase to get the following lemma. 

\begin{lemma}
\label{lem_stationary_phase}
Let $\theta_1$ and $\theta_2$ be two phase functions of the form \eqref{eq:beamlikephase} such that for some $t_0 \in (0,T)$ both phases satisfy  assumptions (B1)-(B3). We use notation $\gamma_j$, $p_j$ and $H_j$ for the paths and the matrix for the phase $\theta_j$, $j=1,2,$ respectively. Suppose the paths $\gamma_j$ collide at $t_0$, that is, 
\begin{equation}
\gamma_1(t_0) = \gamma_2(t_0) \quad {\rm and} \quad
\dot \gamma_1(t_0) = \dot \gamma_2(t_0).
\end{equation}
It follows that there exists an open
neighbourhood $V$ of $(t_0,x_0)$ such that if $u \in C_0^\infty(\R \times \R^n)$ and $\supp u \subset V$ then 
\begin{align*}
\int_0^\infty \int_{\p M} e^{i\epsilon^{-1} \theta_1(t, x)} \bar{e^{i\epsilon^{-1} \theta_2(t, x)}} u\ dS dt
= \beta \epsilon^{n/2} + \O(\epsilon^{n/2 + 1}),
\end{align*}
where $\beta \ne 0$ if and only if $u(t_0, x_0) \ne 0$. 
\end{lemma}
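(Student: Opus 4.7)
Set $\Phi(t,x) = \theta_1(t,x) - \overline{\theta_2(t,x)}$, so the integrand becomes $e^{i\epsilon^{-1}\Phi} u$. The plan is to verify that Hörmander's stationary-phase formula for complex-valued phases (see \cite[Thm.~7.7.5]{Hormander1985a}, adapted to a submanifold via local coordinates on $\partial M$) applies at the single critical point $(t_0,x_0)$ on the $n$-dimensional domain $(0,\infty)\times\partial M$, with an invertible complex Hessian whose imaginary part is positive definite on the relevant tangent space.

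First, I would record the values of $\Phi$ and its first derivatives at $(t_0,x_0)$. Since $x_0=\gamma_1(t_0)=\gamma_2(t_0)$, the ansatz \eqref{eq:beamlikephase} gives $\theta_j(t_0,x_0)=0$, so $\Phi(t_0,x_0)=0$. Differentiating \eqref{eq:beamlikephase} yields $\partial_x\theta_j(t_0,x_0)=p_j(t_0)$ and $\partial_t\theta_j(t_0,x_0)=-p_j(t_0)\dot\gamma_j(t_0)=-|\dot\gamma_j(t_0)|^2_g$. Using (B2) together with $\gamma_1=\gamma_2$ and $\dot\gamma_1=\dot\gamma_2$ at $t_0$, both $p_1(t_0)=p_2(t_0)$ and $-|\dot\gamma_1(t_0)|^2_g=-|\dot\gamma_2(t_0)|^2_g$ are real and equal for $j=1,2$; hence $d_{t,x}\Phi(t_0,x_0)=0$. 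In particular the restriction of $\Phi$ to $(0,\infty)\times\partial M$ is critical at $(t_0,x_0)$.

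The second step is to show $\operatorname{Im}\Phi\geq 0$ in a neighborhood $V$ of $(t_0,x_0)$ and vanishes only at $(t_0,x_0)$ on $V\cap ((0,\infty)\times\partial M)$. Since $\operatorname{Im}\Phi = \operatorname{Im}\theta_1+\operatorname{Im}\theta_2$, the beam-type bound \eqref{im_theta_bound} gives
\begin{equation*}
\operatorname{Im}\Phi(t,x)\geq \beta_1\, d^2(x,\gamma_1(t)) + \beta_2\, d^2(x,\gamma_2(t))
\end{equation*}
with $\beta_1,\beta_2>0$ produced by (B3) in a neighborhood of $(t_0,x_0)$. A Taylor expansion around $t_0$ using $\gamma_1(t_0)=\gamma_2(t_0)=x_0$ and $\dot\gamma_1(t_0)=\dot\gamma_2(t_0)=:v_0$ gives
\begin{equation*}
d^2(x,\gamma_j(t)) = \bigl|x-x_0-(t-t_0)v_0\bigr|^2 + O\bigl((|x-x_0|+|t-t_0|)^3\bigr),
\end{equation*}
and the transversality hypothesis $v_0\notin T_{x_0}\partial M$ from (B1) ensures that the linear map $(s,y)\mapsto y-sv_0$ from $\mathbb{R}\times T_{x_0}\partial M$ to $\mathbb{R}^n$ is injective. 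Consequently $|x-x_0-(t-t_0)v_0|^2$ restricts to a positive definite quadratic form on $\mathbb{R}\times T_{x_0}\partial M$, so $\operatorname{Im}\Phi$ restricted to $(0,\infty)\times\partial M$ has a positive definite Hessian at $(t_0,x_0)$, and in particular vanishes only there in a small $V$.

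Finally, a complex symmetric matrix with positive definite imaginary part is invertible, so the Hessian of $\Phi$ on $T_{(t_0,x_0)}((0,\infty)\times\partial M)$ is a non-degenerate complex quadratic form. The Hörmander stationary phase formula with complex phase then yields
\begin{equation*}
\int_0^\infty\!\!\int_{\partial M} e^{i\epsilon^{-1}\Phi(t,x)}\,u(t,x)\,dS\,dt = \epsilon^{n/2}\frac{(2\pi)^{n/2}}{\sqrt{\det(\Phi''/i)}}\,u(t_0,x_0) + O(\epsilon^{n/2+1}),
\end{equation*}
where $\Phi''$ denotes the Hessian on $\mathbb{R}\times T_{x_0}\partial M$ (with the branch of the square root fixed by the positive imaginary part). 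Setting $\beta$ equal to the leading coefficient and shrinking $V$ if necessary so that $\operatorname{supp} u\subset V$ gives exactly the stated asymptotic, with $\beta\ne 0$ if and only if $u(t_0,x_0)\ne 0$. The main obstacle is step two: combining the Gaussian beam lower bound \eqref{im_theta_bound} with the transversality of $\gamma_j$ to $\partial M$ to certify positive definiteness of the Hessian of the imaginary phase on the boundary tangent directions, rather than only in the $x$-variables where it comes directly from (B3).
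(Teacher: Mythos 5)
Your proposal is correct and follows essentially the same route as the paper: reduce to the restricted phase $\Phi=\theta_1-\overline{\theta_2}$ on $(0,\infty)\times\p M$, check that $(t_0,x_0)$ is a critical point via (B2) and the collision condition, certify non-degeneracy by showing the imaginary part of the Hessian on $\R\times T_{x_0}\p M$ is positive definite using (B1) and (B3), and apply H\"ormander's complex stationary phase theorem. The only (harmless) variation is in the positivity step: the paper computes the Hessian explicitly and identifies the quadratic form as $(w-\alpha\xi_0)H_0(w-\alpha\xi_0)$ with $w-\alpha\xi_0\ne 0$ by transversality, whereas you reach the same conclusion through the lower bound $\Im\theta_j\gtrsim d^2(x,\gamma_j(t))$ and the injectivity of $(s,y)\mapsto y-s v_0$.
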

\begin{proof}
Let us choose such coordinates in a neighborhood of $x_0$ 
that $\p M = \{x \in \R^n;\ x^n = 0 \}$.
We write $x = (x^\t, x^n) \in \R^n$ where $x^\t = (x^1, \dots, x^{n-1}) \in \R^{n-1}$. Then we have for a phase function of the form \eqref{eq:beamlikephase} that
\begin{align*}
\p_{x^\t} \theta &= p(t)^\t + \ll( H(t)(x - \gamma(t)) \rr)^\t + \O \ll(|x - \gamma(t)|^2\rr),
\\
\p_{t} \theta &= - p(t) \dot \gamma(t) + \p_t p(t) (x - \gamma(t)) - \dot \gamma(t) H(t) (x - \gamma(t)) 
\\&\quad\quad+ \O \ll(|x - \gamma(t)|^2\rr),
\\
\p_{x^\t}^2 \theta &= H(t)^\t + \O \ll(|x - \gamma(t)|\rr),
\\
\p_t \p_{x^\t} \theta &= \p_t p(t)^\t - \ll( H(t)\dot \gamma(t) \rr)^\t + \O \ll(|x - \gamma(t)|\rr),
\\
\p_{t}^2 \theta &= -2 \p_t p(t) \dot \gamma(t) - p(t) \p_t \dot \gamma(t) + \dot \gamma(t) H(t) \dot \gamma(t)
\\&\quad\quad+ \O \ll(|x - \gamma(t)|\rr),
\end{align*}
where $H(t)^\t = (H(t))_{j,k=1}^{n-1}$.

Let us consider the map $\Theta(t, x^\t) = (\theta_1 - \bar{\theta_2})(t, x^\t, 0)$.
Assumptions (B1) and (B2) imply that $(t_0, x_0^\t)$ is a critical point of $\Theta$.
We will show next that the critical point is non-degenerate.
As the Hessian  
$(\p_{t, x^\t}^2 \Theta)(t_0, x_0^\t)$ is symmetric, it is enough to show that 
its imaginary part is positive definite. 
We denote $\xi_0 = \dot \gamma_1(t_0)$ and 
\begin{align*}
H_0 = \Im H_1(t_0) + \Im H_2(t_0).
\end{align*}
Let $(\alpha, v) \in \R \times \R^{n-1}$ be nonzero. 
We denote $w = (v, 0) \in \R^n$. Notice that $\xi_0^n \ne 0$ since $\xi_0 \notin T \p M$.
Thus $w - \alpha \xi_0 \ne 0$ and as $H_0$ is positive definite by assumption (3), we have
\begin{align*}
0 &< (w - \alpha \xi_0) H_0 (w - \alpha \xi_0)
\\&= 
v (H_0)^\t v - 2 \alpha \ll( H_0 \xi_0 \rr)^\t v + \alpha^2 \xi_0 H_0 \xi_0
\\&= 
(\alpha, v) (\Im \p_{t, x^\t}^2 \Theta(t_0, x_0^\t)) (\alpha, v).
\end{align*}

As $(t_0, x_0^\t)$ is non-degenerate, there are no other critical points in $\supp(u)$ 
if it is small enough.
Moreover, for small $\supp(u)$, we have that $\Im \Theta \ge 0$ in $\supp(u)$ 
since the first order term in $x - \gamma_j(t)$ is real 
and the imaginary part of the second order term is non-negative and dominates the higher order terms. 

The method of stationary phase \cite[Th.7.7.5]{Hormander1990} gives
\begin{align*}
&\ll| \int_0^\infty \int_{\p M} e^{i\epsilon^{-1} \Theta(t, x)} u\ dS dt - \epsilon^{n/2} \ll( \det(\p_{t, x^\t}^2 \Theta / 2\pi i)^{-1/2} u \sqrt{g} \rr)|_{t=t_0, x = x_0} \rr|
\\&\quad\le C \epsilon^{n/2+1} \norm{u}_{C^{2s}},
\end{align*}
where $dS = \sqrt{g} dx^\t$ is the Riemannian volume measure of $\p M$
and $s$ is the smallest integer satisfying $s \ge n/2 + 1$.
\end{proof}

Let us consider the test functions of the form 
\begin{align}
\label{def_psi}
\psi_\epsilon(t, x; s, y, \eta) &= 
\overline{e^{i\epsilon^{-1} \tilde \theta(t, x) }} \chi_\psi(t, x),
\\\nonumber
\tilde \theta(t, x) &= q (x-\mu(t)) + i |x - \mu(t)|^2,
\\\nonumber
\mu(t) &= (t - s) \eta + y,
\end{align}
where $y \in \p M$, $\eta \in \p_+ S_{y} M$, $s > 0$,
$\chi_\psi \in C_0^\infty(\R^{1+n})$ and $q = \eta^\flat$.
We assume that $\chi_\psi(s, y) = 1$ and that the support of $\chi_\psi$ is small
enough so that $\mu(t)$ intersects $\p M$ only at $t=s$ for $t$ such that $(t,x) \in \supp(\chi_\psi)$ for some $x \in \R^n$.

\begin{theorem}
\label{lem_pairing_asymptotics}
Let $T > 0$ and suppose that $w_\epsilon$ is a Gaussian beam solution as in Theorem \ref{th_from_gb_to_sol},
and that the corresponding formal Gaussian beam $U_\epsilon$ has the order $N_U \ge n/4 +1$
and propagates along the geodesic $\gamma_U$.
Suppose, furthermore, that $\gamma_U$ is transversal to $\p M$.
Then 
\begin{align*}
\norm{w_\epsilon}_{L^2((0,T) \times \p M)}^2 = \O(\epsilon^{n/2}).
\end{align*}
Furthermore, let $\psi_\epsilon$ be as in (\ref{def_psi}).
Then
\begin{align*}
\norm{\psi_\epsilon}_{L^2((0,T) \times \p M)}^2 &= \O(\epsilon^{n/2}),
\\
(\psi_\epsilon, w_\epsilon)_{L^2((0,T) \times \p M)} &= \beta\epsilon^{n/2} + \O(\epsilon^{n/2+1}),
\end{align*}
where 
\begin{align}
\label{beta_test}
\beta \begin{cases}
\ne 0, &\text{if $\gamma_U(s) = y$ and $p(s) = q$},
\\
= 0, &\text{if $\gamma_U(s) \ne y$ or $p(s)^\t \ne q^\t$}.
\end{cases}
\end{align}
\end{theorem}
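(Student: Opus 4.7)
The plan is to decompose $w_\epsilon = \chi_U U_\epsilon + r_\epsilon$ via Theorem \ref{th_from_gb_to_sol}, where $\|r_\epsilon\|_{C^0([0,T] \times \R^n)} = O(\epsilon^{N_U})$, and then reduce all three estimates to oscillatory integrals of beam-like functions that can be treated either by Lemma \ref{lem_stationary_phase} or by a direct integration-by-parts argument.

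For the norm bounds I would apply Laplace asymptotics. Since $|\psi_\epsilon|^2 = |\chi_\psi|^2 e^{-2|x - \mu(t)|^2/\epsilon}$ and $\eta \notin T_y \p M$, in boundary-normal coordinates $|x - \mu(t)|^2$ is a positive definite quadratic form in the $n$ variables $(t - s, x^\t - y^\t)$ to leading order, yielding $\|\psi_\epsilon\|_{L^2((0,T)\times\p M)}^2 = O(\epsilon^{n/2})$. For $w_\epsilon$, the remainder contributes $\|r_\epsilon\|_{L^2}^2 = O(\epsilon^{2N_U}) = O(\epsilon^{n/2+2})$, while \eqref{im_theta_bound} combined with the transversality of $\gamma_U$ to $\p M$ gives, at each of the finitely many transverse crossings, the same Laplace bound $O(\epsilon^{n/2})$ for $\|\chi_U U_\epsilon\|_{L^2}^2$.

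For the pairing, Cauchy-Schwarz controls the remainder:
\[
|(\psi_\epsilon, r_\epsilon)_{L^2}| \le \|\psi_\epsilon\|_{L^2} \|r_\epsilon\|_{L^2} = O(\epsilon^{n/4 + N_U}) = O(\epsilon^{n/2 + 1}),
\]
using $N_U \ge n/4 + 1$. Expanding $\chi_U U_\epsilon = \sum_n \epsilon^n \chi_U u_n e^{i\theta/\epsilon}$, the subleading amplitudes produce integrals of the same type times $\epsilon^n$, so it suffices to analyze the $n=0$ term $I(\epsilon) = \int e^{i(\theta - \overline{\tilde \theta})/\epsilon} \chi_U \chi_\psi u_0\, dS\, dt$. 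I would then split into three cases. \emph{Case A} ($\gamma_U(s) = y$ and $p(s) = q$): both phases satisfy (B1)-(B3) at $t_0 = s$ --- (B1) from transversality of $\gamma_U$ and from $\eta \in \p_+ SM$, (B2) is built in, and (B3) because $\Im(iI) = I$ and by the Gaussian beam construction --- and the collision hypothesis holds exactly; Lemma \ref{lem_stationary_phase} gives $I(\epsilon) = \beta \epsilon^{n/2} + O(\epsilon^{n/2+1})$ with $\beta \ne 0$, since $\chi_\psi(s, y) = 1$, $\chi_U(s, y) = 1$ may be arranged, and $u_0(s, \gamma_U(s)) \ne 0$. \emph{Case B} ($\gamma_U(s) \ne y$): after shrinking $\supp \chi_\psi$, continuity yields $|\gamma_U(t) - \mu(t)| \ge c > 0$ on $\supp(\chi_U \chi_\psi)$, so $\Im(\theta - \overline{\tilde\theta}) \ge \beta_\theta d^2(x, \gamma_U(t)) + |x - \mu(t)|^2 \ge c' > 0$ there, whence $I(\epsilon) = O(e^{-c'/\epsilon})$ and $\beta = 0$. \emph{Case C} ($\gamma_U(s) = y$ but $p(s)^\t \ne q^\t$): a direct computation at $(s, y)$ yields $\p_{x^\t}(\theta - \overline{\tilde\theta})(s, y) = p(s)^\t - q^\t \ne 0$, and repeated integration by parts with $(i\epsilon / \p_{x^j}(\theta - \overline{\tilde\theta}))\p_{x^j}$, for a tangential index $j$ where the derivative is nonzero, produces $I(\epsilon) = O(\epsilon^\infty)$, so $\beta = 0$.

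The hardest step is Case C: the phase is complex and its imaginary part vanishes along the curve $\{\gamma_U(t) = \mu(t) = x\}$ rather than at an isolated point, so justifying the integration by parts requires showing that the relevant tangential derivative of the phase is uniformly nonvanishing on an open neighborhood of $(s, y)$ containing the entire integration support, not merely at the point itself. This is a continuity argument combined with a further shrinking of $\supp \chi_\psi$; alternatively one may invoke H\"ormander's non-stationary phase lemma for complex-valued phases with non-negative imaginary part.
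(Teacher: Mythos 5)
Your proposal is correct and follows essentially the same route as the paper: reduce to the formal beam $\chi_U U_\epsilon$ via Theorem \ref{th_from_gb_to_sol}, get the norm bounds from the Gaussian decay at the finitely many transversal boundary crossings, and split the pairing into the same three cases, using Lemma \ref{lem_stationary_phase} in the colliding case, uniform positivity of the imaginary part of the phase when $\gamma_U(s)\neq y$, and non-stationary phase when $p(s)^\t\neq q^\t$. For the last case the paper simply applies the complex-phase non-stationary phase estimate \cite[Th. 7.7.1]{Hormander1990} to $\Re(\theta-\tilde\theta)$, which is exactly the cleaner alternative you mention at the end, so the delicacy you flag about integrating by parts against a complex phase whose imaginary part vanishes along a curve does not arise.
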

\begin{proof}
Notice that 
\begin{align*}
\norm{w_\epsilon - \chi_U U_\epsilon}_{L^2((0,T) \times \p M)} = \O(\epsilon^{n/4+1}),
\end{align*}
whence it is enough to prove the claimed estimates with $w_\epsilon$
replaced by $\chi_U U_\epsilon$.
Let us write $\gamma = \gamma_U$. As $\gamma$ is transversal to $\p M$, 
there are finitely many points in $\gamma((0,T)) \cap \p M$, say $x_1 = \gamma(t_1), \dots, x_J = \gamma(t_J)$.
If $V_j \subset (0,T) \times \p M$ is a sufficiently small neighborhood of $(t_j,x_j)$, 
then Lemma \ref{lem_stationary_phase} yields that 
\begin{align*}
\norm{\chi_U U_\epsilon}_{L^2(V_j)}^2 = \O(\epsilon^{n/2}).
\end{align*}
Let us write $R = ((0,T) \times \p M) \setminus \bigcup_{j=1}^J V_j$. By (\ref{im_theta_bound}) and \cite[Th. 7.7.1]{Hormander1990} we have that 
\begin{align*}
\norm{\chi_U U_\epsilon}_{L^2(R)}^2 = \O(\epsilon^N),
\end{align*}
for any $N \in \N$. 

Let us now consider the asymptotic estimates involving $\psi_\epsilon$.
By the definition of $\psi_\epsilon$ we have 
\begin{align*}
\supp(\psi_\epsilon) \cap ((0,T) \cap \p M) \cap \{(t, \mu(t)); t \in (0,T)\}= \{(s,y)\}.
\end{align*}
Let $V \subset (0,T) \times \p M$ be a neighborhood of $(s,y)$
and define the set $R = ((0,T) \cap \p M) \setminus V$.
Then $\Im (\theta + \tilde \theta) \ge \Im \tilde \theta > 0$
in $R$, whence
\begin{align*}
\norm{\psi_\epsilon}_{L^2(R)}^2 = \O(\epsilon^{N}),
\quad
(\psi_\epsilon, w_\epsilon)_{L^2(R)} = \O(\epsilon^{N}),
\end{align*}
for any $N \in \N$. Thus it is enough to consider 
the norm and the inner product on $V$.

Lemma \ref{lem_stationary_phase} gives immediately the claimed asymptotic behaviour of the norm of $\psi_\epsilon$ and also that of the inner product if $\gamma(s) = y$ and $p(s) = q$. Under this assumption $\beta \ne 0$ since the principal amplitude function of the Gaussian beam satisfies $u_0(s, \gamma(s)) \ne 0$.

Let us consider the case $\gamma(s) \ne y$. 
If $V$ is small enough, then we have that 
$\Im (\theta + \tilde \theta) \ge \Im \theta> 0$ in $V$ and 
\begin{align*}
(\psi_\epsilon, \chi_U U_\epsilon)_{L^2(V)} = \O(\epsilon^N),
\end{align*}
for any $N \in \N$. 

Let us consider the case $\gamma(s) = y$ and $p(s)^\t \ne q^\t$.
Then 
\begin{align*}
\p_{x^\t} \Re (\theta - \tilde \theta)(s, y) = p(s)^\t - q^\t \ne 0.
\end{align*}
If $V$ is small enough, then 
the function $\Re (\theta - \tilde \theta)$ has no critical points in $V$ and 
\cite[Th. 7.7.1]{Hormander1990} implies 
\begin{align*}
(\psi_\epsilon, \chi_U U_\epsilon)_{L^2(V)} = \O(\epsilon^N),
\end{align*}
for any $N \in \N$. 
\end{proof}

\section{Correlations generated by the white noise source}
\label{sec:rvs_and_asymptotics}

In the following two sections we extract the travel time data by studying the correlation of certain white noise related random processes on the boundary $\partial M$.
%

\subsection{Definition of two random processes}
\label{sec:def_process}

Let us choose a compact set $\hat M \subset \R^n$
such that $M \subset \hat M^\inter$ and a constant
$T > T_{ED}$ such that $T - 1$ 
is greater than the maximum length of a geodesic of $(\hat M, g)$.
Here the constant $T_{ED}$ is as in Theorem \ref{th_energy_decay}.

Let us introduce the notation $\pt$ and $\pgb$ for the points 
\begin{align}
	\label{eq:def_pt}
	\pt &= (s,y,\eta), \quad  (s,y,\eta) \in (1,T) \times  \partial_+ SM 
\\
	\label{eq:def_pgb}
	\pgb &= (T, z, \zeta), \quad (z, \zeta) \in S (\hat M \setminus M). 
\end{align}
In what follows, we are only interested in the points $\pgb$ such that the geodesic $\gamma$ from $(z, \zeta)$ enters in $M$.

We define our test process
\begin{equation}
\label{eq:deftestprocess}
X_\epsilon(\pt) = (W, \epsilon^{-n/4} \psi_\epsilon(\pt))_{\D' \times C_0^\infty((0, T) \times \p M)}.
\end{equation}
where $\psi_\epsilon(t, x; \pt)$ is the test function defined in (\ref{def_psi}).
We assume that the cut-off function $\chi_\psi$ in (\ref{def_psi}) is chosen so that
\begin{align*}
\supp(\psi_\epsilon(\pt)) \subset (1,T) \times \R^n.
\end{align*}
Moreover, we denote by $w_\epsilon(t, x; \pgb)$ the Gaussian beam solution of 
Theorem \ref{lem_pairing_asymptotics}, where we have written out explicitly the dependence on the parameters $z = \gamma_U(T)$ and $\zeta = -\dot \gamma_U(T)$. 
Let us write $\gamma(t) = \gamma_U(T-t)$.
The assumption (A2) implies that if the geodesic $\gamma$ 
enters in $M$ then it is transversal to $\p M$.
Let us define the process 
\begin{equation}
	\label{eq:defgbprocess}
	Y_\epsilon(\pgb) = (W, \epsilon^{-n/4} \chi_+ w_{\epsilon}(\pgb))_{\D' \times C_0^\infty((0, T) \times \p M)}.
\end{equation}
We remind the reader that the measurement $L( W(\omega_0))$
determines the realization $Y_\epsilon(\pgb; \omega_0)$ almost surely via equation \eqref{pairing_Ww}.

Notice that the functions $\pt \mapsto X_\epsilon(\pt)$ and $\pgb \mapsto Y_\epsilon(\pgb)$
are complex-valued zero-centered Gaussian random processes parametrized by sets \eqref{eq:def_pt} and \eqref{eq:def_pgb}, respectively.
Let us define
\begin{align*}
\beta_\epsilon(\pt, \pgb) = \expec X_\epsilon(\pt) Y_\epsilon(\pgb),
\quad 
\beta(\pt, \pgb) = \lim_{\epsilon \to 0} \beta_\epsilon(\pt, \pgb).
\end{align*}
Then (\ref{the_correlations}) implies that 
\begin{align*}
\beta_\epsilon(\pt, \pgb) = \epsilon^{-n/2} (\psi_\epsilon(\pt), w_{\epsilon}(\pgb))_{L^2((0, T) \times \p M)},
\end{align*}
since $\chi_+ = 1$ in $\supp(\psi_\epsilon(\pt))$.
Moreover, Theorem \ref{lem_pairing_asymptotics} implies that $\beta(\pt,\pgb) = \beta$ satisfies (\ref{beta_test}),
and that, for fixed $\pt$ and $\pgb$,
the expected values $\expec|X_\epsilon(\pt)|^2$ and $\expec |Y_\epsilon(\pgb)|^2$ stay bounded as $\epsilon \to 0$.

\subsection{Time translations of the processes}
\label{sec:time_translations}

Let us introduce time translated variables to our analysis. 
We let $j \in \N$, define 
\begin{align*}
\psi_\epsilon^j(t,x;\pt) = \psi_\epsilon(t - (j-1)T, x; \pt)
\end{align*}
and denote by $w_\epsilon^j(t,x; \pgb)$ the solution of 
\begin{align*}
&\p_t^2 w - \Delta_g w = 0 \quad \text{on $(-\infty, jT) \times \R^n$},
\\&w(jT) = w_\epsilon(T;\pgb),\ 
\p_t w_\epsilon(jT) = \p_t w_\epsilon(T;\pgb).
\end{align*}
In the following, we abbreviate $\psi_\epsilon^j(\pt) = \psi_\epsilon^j(\cdot, \cdot ;\pt)$ and $w_{\epsilon}^j(\pgb) = w_{\epsilon}^j(\cdot, \cdot \; ;\pgb)$, respectively.
Moreover, we write
\begin{align*}
X_\epsilon^j(\pt) &= (W, \epsilon^{-n/4} \psi_\epsilon^j(\pt))_{\D' \times C_0^\infty((0, jT) \times \p M)},
\\
Y_\epsilon^j(\pgb) &= (W, \epsilon^{-n/4} \chi_+ w_{\epsilon}^j(\pgb))_{\D' \times C_0^\infty((0, jT) \times \p M)}.
\end{align*}

Notice that for fixed $\pt$ and $\epsilon$ the Gaussian random variables 
$X_\epsilon^j(\pt)$, $j = 1, 2, \dots$, are independent and identically distributed (i.i.d).
Also for fixed $\pgb$ and $\epsilon$ the Gaussian random variables 
\begin{align*}
(W, \epsilon^{-n/4} w_{\epsilon}^j(\pgb))_{\D' \times C_0^\infty(((j-1)T, jT) \times \p M)}, 
\quad j = 1, 2, \dots,
\end{align*}
are i.i.d. Further, we have
\begin{align}
\label{the_inner_product}
&\expec X_\epsilon^j(\pt) Y_\epsilon^j(\pgb) 
= 
\epsilon^{-n/2} (\psi_\epsilon^j(\pt), \chi_+ w_\epsilon^j(\pgb))_{L^2((0, jT) \times \p M)}
\\\nonumber&\quad= 
\epsilon^{-n/2} (\psi_\epsilon(\pt), w_\epsilon(\pgb))_{L^2((0, T) \times \p M)}
= 
\beta_\epsilon(\pt, \pgb),
\end{align}
since $\psi_\epsilon^j$ is supported on $((j-1)T+1, jT) \times \p M$.

Theorem \ref{th_energy_decay} guarantees that $w_\epsilon(\pgb) \in L^2((-\infty,T) \times \p M)$.
Hence for fixed $\pgb$ and $\epsilon > 0$
the random variables
$Y_\epsilon^j(\pgb)$ are uniformly bounded with respect to $j$, that is,
\begin{align}
\label{Y_bounded_wrt_j}
\expec |Y_\epsilon^j(\pgb)|^2 &= \norm{\chi_+ w_\epsilon^j(\pgb)}_{L^2((0, jT) \times \p M)}^2
\\\nonumber&\le \norm{w_\epsilon(\pgb)}_{L^2((-\infty,T) \times \p M)}^2 < \infty.
\end{align}
Furthermore, Theorem \ref{th_energy_decay} implies the following estimate
\begin{align}
\label{tail_estimate}
\norm{\chi_+ w_\epsilon^j(\pgb)}_{L^2((0, (j-J)T) \times \p M)}^2
&\le \norm{w_\epsilon(\pgb)}_{L^2((-\infty,-(J-1)T) \times \p M)}^2 
\\\nonumber&\le C J^{-2n+3}, \quad J \ge 1,
\end{align}
where the constant $C > 0$ does not depend on $j$ and $J$.
Notice that if the dimension is odd we obtain even better estimates. However, in this treatise they are not needed.




\begin{lemma}
	\label{lem:aux_ineq2}
Suppose $X,Y,Z,V : \Omega\to \C$ are zero-mean Gaussian random variables. Then it holds that
\begin{equation}
	|\expec (X Y Z V)| \le 3 \sqrt{\expec |X|^2 \expec |Y|^2 \expec |Z|^2 \expec |V|^2}
\end{equation}
\end{lemma}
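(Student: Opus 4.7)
My plan is to reduce the statement to Isserlis' (Wick's) theorem followed by Cauchy--Schwarz.

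First, I would interpret ``zero-mean complex Gaussian random variables $X,Y,Z,V$'' to mean that the real random vector consisting of their real and imaginary parts is jointly Gaussian with mean zero. By the (real) Isserlis formula applied componentwise and bilinearity, one obtains the standard complex Wick identity
\begin{equation*}
\expec(XYZV) = \expec(XY)\expec(ZV) + \expec(XZ)\expec(YV) + \expec(XV)\expec(YZ).
\end{equation*}
This is the conceptual heart of the lemma: expectation of a product of four centered jointly Gaussian variables is the sum over the three perfect matchings of $\{X,Y,Z,V\}$ of products of pair-expectations.

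Next, I would apply the Cauchy--Schwarz inequality to each pair factor. For any two complex random variables $A, B$ in $L^2(\Omega)$,
\begin{equation*}
|\expec(AB)| = |\expec(A\,\overline{\bar B})| \le \sqrt{\expec|A|^2\, \expec|\bar B|^2} = \sqrt{\expec|A|^2\, \expec|B|^2}.
\end{equation*}
Applying this to each of the three terms in the Wick expansion yields
\begin{equation*}
|\expec(XY)\expec(ZV)| \le \sqrt{\expec|X|^2\, \expec|Y|^2\, \expec|Z|^2\, \expec|V|^2},
\end{equation*}
and analogously for the two other pairings. Summing the three contributions produces exactly the factor $3$ in the claimed bound.

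I do not expect any real obstacle, but the only step that requires care is the derivation of the complex Wick identity: since the hypothesis only says ``Gaussian,'' one should verify that the decomposition $X = \Re X + i\, \Im X$, etc., produces an $8$-dimensional real Gaussian vector and that expanding $XYZV$ and applying real Isserlis to each of the resulting $16$ monomials regroups (thanks to $\C$-bilinearity of $\expec$) into the three complex pairings above. Once this identity is in hand, the remainder of the argument is immediate.
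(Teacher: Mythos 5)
Your proof is correct, but it takes a different route from the paper's. You expand $\expec(XYZV)$ directly via the four-variable complex Wick/Isserlis formula into the three pairings $\expec(XY)\expec(ZV)+\expec(XZ)\expec(YV)+\expec(XV)\expec(YZ)$ and then bound each pair factor by Cauchy--Schwarz, the factor $3$ arising as the number of pairings. The paper instead applies Cauchy--Schwarz first at the level of the products, $|\expec(XYZV)|^2\le \expec|XY|^2\,\expec|ZV|^2$, and then controls each second moment of a product by the two-variable fourth-moment identity $\expec|XY|^2=|\expec X\bar Y|^2+|\expec XY|^2+\expec|X|^2\expec|Y|^2$ (quoted in its real form from the literature and extended to complex variables), giving $\expec|XY|^2\le 3\,\expec|X|^2\expec|Y|^2$ and hence the constant $\sqrt{9}=3$. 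The underlying Gaussian input is essentially the same --- your identity for $\expec|XY|^2$ is itself the Wick formula applied to $X,Y,\bar X,\bar Y$ --- but the paper's arrangement only requires the pairwise moment identity and so avoids justifying the full complex four-variable Wick expansion, which is precisely the step you correctly flag as the one needing care (joint Gaussianity of the $8$-dimensional real vector of real and imaginary parts, and regrouping of the $16$ monomials by $\C$-bilinearity). Your version is arguably more transparent about where the constant $3$ comes from; the paper's is slightly more economical in what it must verify. Both arguments tacitly use that $X,Y,Z,V$ are \emph{jointly} Gaussian, which holds in the intended application since all four are linear functionals of the same white noise.
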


\begin{proof}
	Let us begin by pointing out that if $X$ and $Y$ are zero-mean real valued Gaussian random variables, then it holds (see e.g. \cite{LPS2008}) that
	\begin{equation*}
		\expec (XY)^2 = 2 (\expec XY)^2 + \expec X^2 \expec Y^2.
	\end{equation*}
	It follows by a straightforward calculation that if instead $X$ and $Y$ are complex-valued, we have
	\begin{equation}
		\label{eq:aux_ineq2}
		\expec |XY|^2 = |\expec X \bar Y|^2 + |\expec XY|^2 + \expec |X|^2 \expec |Y|^2
	\end{equation}
	and, consequently, $\expec |XY|^2 \leq 3 \expec |X|^2 \expec |Y|^2$
	by the Cauchy--Schwarz inequality. It remains to note that
	\begin{equation*}
		|\expec (X Y Z V)|^2 \leq \expec |XY|^2 \expec |ZV|^2
	 \le 9 \expec |X|^2 \expec |Y|^2 \expec |Z|^2 \expec |V|^2
	\end{equation*}
	and the claim follows.
\end{proof}

\begin{lemma}
	\label{lem:crosscorr_decay}
Let $\epsilon > 0$, $\pt$ and $\pgb$ be fixed and 
let us denote $X^j = X_\epsilon^j(\pt)$ and $Y^j = Y^j_\epsilon(\pgb)$ for $j=1,2,\dots$.
Suppose that $|j-k| \ge 1$. 
Then 
\begin{equation}
	|\expec \left(X^j Y^j-\expec X^j Y^j\right) 
	\overline{\left(X^k Y^k - \expec X^k Y^k\right)}|
	 = \O\left(|j-k|^{-n+\frac 32}\right).
\end{equation}
\end{lemma}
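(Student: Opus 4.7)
The plan is to expand the centred fourth moment via the Wick/Isserlis formula and bound the resulting pairwise covariances. Since $W$ is a real Gaussian white noise on $\R \times \p M$ and each of $X^j, Y^j, \bar{X^k}, \bar{Y^k}$ is an $\R$-linear functional of $W$ (using $\bar{(W,\phi)} = (W,\bar\phi)$), Isserlis' theorem, applied after decomposing into real and imaginary parts, yields
\begin{equation*}
\expec[X^j Y^j \bar{X^k Y^k}] = \expec[X^j Y^j]\,\bar{\expec[X^k Y^k]} + \expec[X^j \bar{X^k}]\,\expec[Y^j \bar{Y^k}] + \expec[X^j \bar{Y^k}]\,\expec[Y^j \bar{X^k}].
\end{equation*}
The first term exactly cancels against the mean-subtraction in the lemma, so the claim reduces to bounding the sum of the remaining two products of pairwise covariances.

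Next I rewrite each covariance, via the white-noise property (\ref{the_correlations}), as an $L^2$ pairing $\epsilon^{-n/2}\int \phi\,\bar\psi\,dS\,dt$ of test functions chosen among $\psi_\epsilon^\ell$ and $\chi_+ w_\epsilon^\ell$, $\ell \in \{j, k\}$. Assume without loss of generality $k > j$; the opposite case is handled by complex conjugation and relabelling. The time-supports are $\psi_\epsilon^\ell \subset ((\ell-1)T+1, \ell T)$ and $\chi_+ w_\epsilon^\ell \subset [1, \ell T]$; since $(k-1)T+1 > jT$, one reads off $\expec[X^j \bar{X^k}] = 0$ (disjoint supports of the $\psi$'s) and $\expec[Y^j \bar{X^k}] = 0$. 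For the surviving covariances $\expec[Y^j \bar{Y^k}]$ and $\expec[X^j \bar{Y^k}]$, Cauchy--Schwarz on the overlap $[1, jT] \times \p M$ combined with the tail estimate (\ref{tail_estimate}) applied to $w_\epsilon^k$ with $J = k-j$ produces
\begin{equation*}
\|\chi_+ w_\epsilon^k\|_{L^2((0, jT) \times \p M)} = \O\bigl((k-j)^{-n+3/2}\bigr),
\end{equation*}
while $\|\chi_+ w_\epsilon^j\|_{L^2((0, jT)\times \p M)}$ and $\|\psi_\epsilon^j\|_{L^2}$ are uniformly bounded in $j$ by (\ref{Y_bounded_wrt_j}) and Theorem \ref{lem_pairing_asymptotics}, respectively. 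With $\epsilon$ fixed the $\epsilon^{-n/2}$ normalisation is absorbed into the constant, giving $|\expec[Y^j \bar{Y^k}]|,\, |\expec[X^j \bar{Y^k}]| = \O((k-j)^{-n+3/2})$. Multiplied against the other (bounded or vanishing) factor in each Isserlis product, this yields the stated estimate.

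The principal difficulty is twofold. First, one must correctly organise the complex Wick pairings to see the exact cancellation of the mean; this hinges on the $\R$-linearity of complex conjugation when applied to functionals of the real noise $W$, so that Isserlis may be transported from the real to the complex setting without additional pseudo-covariance terms. Second, one must align the time-shift in the tail estimate (\ref{tail_estimate}) so that the backward-propagated Gaussian beam $w_\epsilon^k$, evaluated on the pairing domain $[1, jT]\times \p M$, has travelled for precisely the $(k-j)T$ units of time needed to extract the polynomial decay $\eta((k-j)T) \lesssim (k-j)^{-2n+3}$ from Theorem \ref{th_energy_decay}; the remaining bookkeeping (support intersections, bounded norms) is then routine.
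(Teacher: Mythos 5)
Your proof is correct, but it takes a genuinely different route from the paper's. The paper does not expand the fourth moment: it splits $Y^j$ (for $j>k$) into a ``large-time'' part $Y^j_{\rm large}$, depending only on the noise for $t>kT$ and hence independent of the pair $(X^k,Y^k)$, plus a small remainder $Y^j_{\rm small}$ supported near $[0,kT]$; the independent part cancels exactly against the mean subtraction, and the remainder is controlled by the crude fourth-moment bound of Lemma \ref{lem:aux_ineq2} together with the tail estimate \eqref{tail_estimate}, which gives $\sqrt{\expec|Y^j_{\rm small}|^2}=\O(|j-k|^{-n+3/2})$. You instead apply the Wick/Isserlis identity directly to the four jointly Gaussian functionals $X^j, Y^j, \overline{X^k}, \overline{Y^k}$ (legitimately, since each is a $\C$-linear functional of the real field $W$ and the identity extends by multilinearity), cancel the mean term, and inspect the two remaining pair products. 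Your support bookkeeping is right, and it is worth noting that your computation actually proves more than the lemma asks: in \emph{each} of the two surviving Wick products one factor vanishes identically ($\expec X^j\overline{X^k}=0$ because the $\psi$'s have disjoint time supports, and $\expec Y^j\overline{X^k}=0$ because $\psi^k_\epsilon$ lives above $t=jT$ while $Y^j$ only sees $t<jT$), so the centred cross-correlation is exactly zero and the tail estimate \eqref{tail_estimate} is not even needed on your route --- whereas the paper's argument genuinely relies on it. The only cosmetic imprecision is the claim that $\chi_+ w^\ell_\epsilon$ is supported in $[1,\ell T]$; the cut-off $\chi_+$ only vanishes near $(-\infty,0]$, so the correct statement is support in $[\delta,\ell T]$ for some $\delta\in(0,1)$, which changes nothing in the argument.
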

\begin{proof}

\def\l{\text{large}}
\def\s{\text{small}}

We may assume without loss of generality that $j > k$.
Let us split $Y^j$ in two parts 
\begin{align*}
Y^j &= 
(W, \epsilon^{-n/4} \chi \chi_+ w_{\epsilon}^j(\pgb)) + (W, \epsilon^{-n/4} (1 - \chi) \chi_+ w_{\epsilon}^j(\pgb)) 
\\&= Y_\l^j + Y_\s^j,
\end{align*}
where $\chi \in C^\infty(\R)$ satisfies $\chi(t) = 0$ for $t \in (0, kT)$
and $\chi(t) = 1$ outside a neighborhood of $[0,kT]$.
Notice that $(X^j, Y_\l^j)$ and $(X^k, Y^k)$ are independent Gaussian random variables from $\Omega$ to $\C^2$.
Thus also the products $X^j Y_\l^j$ and $X^k Y^k$ are independent random variables.
In particular, 
\begin{align*}
\expec X^j Y_\l^j \overline{X^k Y^k} = \expec X^j Y_\l^j \overline{\expec X^k Y^k}.
\end{align*}
Hence it follows that
\begin{align*}
&|\expec \left(X^j Y^j-\expec X^j Y^j\right) 
	\overline{\left(X^k Y^k - \expec X^k Y^k\right)}|
\\&\quad=
|\expec X^j Y^j \overline{X^k Y^k} - \expec X^j Y^j \overline{\expec X^k Y^k}|
\\&\quad=
|\expec X^j Y_\s^j \overline{X^k Y^k} - \expec X^j Y_\s^j \overline{\expec X^k Y^k}|
\\&\quad\le
4\sqrt{\expec |X^j|^2 \expec |Y_\s^j|^2 \expec |X^k|^2 \expec |Y^k|^2},
\end{align*}
where we have used Lemma \ref{lem:aux_ineq2} and the Cauchy--Schwarz inequality.
The claim follows as $\sqrt{\expec |Y_\s^j|^2} = \O((j-k)^{-n+\frac 32})$
and the norms of $X^j$, $X^k$ and $Y^k$ are uniformly bounded with respect to $j$ and $k$.
\end{proof}

\section{Ergodicity by time translations}
\label{sec:ergodicity}

In our problem formulation the source is an infinitely long realisation of a white noise process. However, the random processes introduced in Section \ref{sec:def_process}
are mostly affected by the source on some finite interval in time. Similarly, the correlation decays fast between the source at other time instances and the variables \eqref{eq:defgbprocess} and \eqref{eq:deftestprocess}, respectively.
In Section \ref{sec:time_translations} we introduced time-translations to sample the full time domain. Below we utilize the zero correlation length of this process \emph{in time} in order to apply ergodicity arguments.
We begin by recording the following lemma.

\begin{lemma}
\label{lem:simple_ineq}
Let $(a_j)_{j=1}^\infty$ and $(b_k)_{k=1}^\infty$ 
be sequences of complex numbers and let us write
\begin{equation*}
	S_N = \frac{1}{N^2} \sum_{1\leq j < k \leq N} a_j b_k.
\end{equation*}
Suppose that $|a_j b_k| \le |j-k|^{-p}$ for $p\geq 1/2$. Then
$|S_N| = \O(N^{-1/2})$.
\end{lemma}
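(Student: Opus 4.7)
The statement is a purely elementary inequality, so the plan is just to execute the natural bound and keep track of the worst exponent $p = 1/2$.

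First I would apply the triangle inequality and the hypothesis to obtain
\begin{equation*}
|S_N| \;\le\; \frac{1}{N^2} \sum_{1 \le j < k \le N} |a_j b_k| \;\le\; \frac{1}{N^2} \sum_{1 \le j < k \le N} |j-k|^{-p}.
\end{equation*}
The next step is to reindex the double sum by the gap $d = k - j$. For each $d \in \{1, \dots, N-1\}$, the number of pairs $(j,k)$ with $1 \le j < k \le N$ and $k - j = d$ equals $N - d \le N$, so
\begin{equation*}
\sum_{1 \le j < k \le N} |j-k|^{-p} \;=\; \sum_{d=1}^{N-1} (N-d)\, d^{-p} \;\le\; N \sum_{d=1}^{N-1} d^{-p}.
\end{equation*}

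Then I would estimate $\sum_{d=1}^{N-1} d^{-p}$ by comparison with the integral $\int_1^N t^{-p}\,dt$, noting the three regimes: for $p > 1$ the series is bounded by a constant; for $p = 1$ it is $O(\log N)$; and for $1/2 \le p < 1$ it is $O(N^{1-p})$. The bottleneck is the endpoint $p = 1/2$, where the tail bound gives $O(N^{1/2})$. Inserting this into the previous display yields
\begin{equation*}
|S_N| \;\le\; \frac{1}{N^2}\cdot N \cdot O(N^{1/2}) \;=\; O(N^{-1/2}),
\end{equation*}
and the bound is sharper for $p > 1/2$, so in every case the claim holds.

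There is no real obstacle here; the only thing to be careful about is that the hypothesis is phrased as $p \ge 1/2$ rather than $p > 1/2$, so one must verify that the borderline case $p = 1/2$ still produces the stated rate $N^{-1/2}$, which it does precisely because the factor $1/N^2$ absorbs one power of $N$ from the diagonal count and one-half power from $\sum_{d=1}^{N-1} d^{-1/2}$.
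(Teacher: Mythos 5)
Your proof is correct and follows essentially the same route as the paper: both reindex the double sum by the gap $\ell = k-j$, use the count $N-\ell$ of pairs with that gap, and bound the resulting single sum (the paper by comparison with $\int_1^{N-1}(N-s)s^{-p}\,ds$, you by the slightly cruder $(N-d)\le N$ followed by $\sum_d d^{-p} = \O(N^{1/2})$), arriving at $N^2|S_N| = \O(N^{3/2})$ in the borderline case $p=1/2$. No issues.
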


\begin{proof}
We notice 
$\sum_{1\leq j < k \leq N} |j-k|^{-p} = \sum_{\ell=1}^{N-1} (N - \ell) \ell^{-p}$
and conclude that
\begin{equation*}
	N^2|S_N| \le \sum_{\ell=1}^{N-1} (N - \ell) \ell^{-p}
	\le N+\int_1^{N-1} (N-s)s^{-p}ds = \O(N^{3/2}).
\end{equation*}
Now the claim follows.
\end{proof}

\def\e{\epsilon}

Let us now show that $\beta_\e$ and, consequently, $\beta$ can be recovered from the measurement data.
We consider the convergence of the following series
\begin{equation}
	\label{eq:iepsilon_n}
	\I_{\e,N}(\pt, \pgb; \omega) = \frac{1}{N} \sum_{j=1}^{N}
X_\epsilon^j(\pt; \omega)  Y_\epsilon^j(\pgb; \omega).
\end{equation}
\begin{theorem}
	\label{thm:convergence}
	Let $\pt$, $\pgb$ and $\e>0$ be fixed. Then 
	\begin{equation}
		\label{eq:ind=beta}
		\lim_{N\to\infty} \I_{\e,N^3}(\pt, \pgb; \omega) = \beta_\e(\pt, \pgb) 		
	\end{equation}
	almost surely.
\end{theorem}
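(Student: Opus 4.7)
The plan is to prove (\ref{eq:ind=beta}) by a second-moment estimate combined with the Borel--Cantelli lemma, exploiting the correlation decay quantified in Lemma \ref{lem:crosscorr_decay}. Write $Z_j = X_\e^j(\pt) Y_\e^j(\pgb) - \beta_\e(\pt,\pgb)$, so that by (\ref{the_inner_product}) each $Z_j$ has mean zero, and set $S_N = \I_{\e,N}-\beta_\e = \frac{1}{N}\sum_{j=1}^N Z_j$. The goal becomes $S_{N^3} \to 0$ almost surely as $N\to\infty$.

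I would first establish the bound $\expec|S_N|^2 = O(N^{-1/2})$ by decomposing
\begin{equation*}
\expec|S_N|^2 = \frac{1}{N^2}\sum_{j=1}^N \expec|Z_j|^2 + \frac{1}{N^2}\sum_{j\ne k}\expec Z_j\overline{Z_k}.
\end{equation*}
For the diagonal, Lemma \ref{lem:aux_ineq2} together with the uniform bounds on $\expec|X_\e^j|^2$ (from Theorem \ref{lem_pairing_asymptotics} and \eqref{the_correlations}) and on $\expec|Y_\e^j|^2$ (from Theorem \ref{th_energy_decay} and \eqref{Y_bounded_wrt_j}) gives $\expec|Z_j|^2 = O(1)$ uniformly in $j$, so the diagonal contributes $O(1/N)$. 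For the off-diagonal, a direct expansion identifies $\expec Z_j\overline{Z_k}$ with the quantity bounded by Lemma \ref{lem:crosscorr_decay}, yielding $|\expec Z_j\overline{Z_k}| = O(|j-k|^{-n+3/2})$. Since $n\ge 2$, the exponent is at most $-1/2$, and the same elementary calculation that proves Lemma \ref{lem:simple_ineq} shows the off-diagonal sum contributes $O(N^{-1/2})$.

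Substituting $N^3$ for $N$ then gives $\expec|S_{N^3}|^2 = O(N^{-3/2})$. For any $\delta>0$, Chebyshev's inequality yields
\begin{equation*}
\P(|S_{N^3}|>\delta) \le \frac{C}{\delta^2 N^{3/2}},
\end{equation*}
which is summable in $N$. Borel--Cantelli therefore gives $\limsup_N|S_{N^3}|\le\delta$ almost surely; choosing a sequence $\delta_m\to 0$ and intersecting the resulting probability-one events yields $\lim_{N\to\infty} S_{N^3} = 0$ almost surely, i.e.\ (\ref{eq:ind=beta}).

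The main obstacle is the off-diagonal estimate: the variables $X_\e^j$ are i.i.d.\ in $j$, but the $Y_\e^j$ are not, because each backward Gaussian beam extends across earlier time blocks $((k-1)T,kT)\times\partial M$, so $Y_\e^j$ and $Y_\e^k$ depend on overlapping portions of $W$. The role of the energy-decay estimate (Theorem \ref{th_energy_decay}) together with the splitting argument in Lemma \ref{lem:crosscorr_decay} is precisely to convert this weak dependence into the polynomial decay $|j-k|^{-n+3/2}$, which is just enough to obtain $\expec|S_N|^2 = O(N^{-1/2})$; the passage to the subsequence $N^3$ is then forced, since this raw rate is not summable in $N$ while $N^{-3/2}$ is.
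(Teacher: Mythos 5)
Your proposal is correct and follows essentially the same route as the paper: the variance of $\I_{\e,N}$ is split into a diagonal part controlled by Lemma \ref{lem:aux_ineq2} and \eqref{Y_bounded_wrt_j} and an off-diagonal part controlled by Lemmas \ref{lem:crosscorr_decay} and \ref{lem:simple_ineq}, giving ${\rm Var}(\I_{\e,N})=\O(N^{-1/2})$ and hence a summable bound along the subsequence $N^3$. The only (immaterial) difference is the last step: the paper deduces almost sure convergence from the almost sure finiteness of $\sum_N |\I_{\e,N^3}-\expec\I_{\e,N^3}|^2$, whereas you use Chebyshev plus Borel--Cantelli with $\delta_m\to 0$.
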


\begin{proof}
By (\ref{the_inner_product}) we have that $\expec \I_{\e,N}(\pt, \pgb; \omega) = \beta_\e(\pt, \pgb)$.
Let us compute
\begin{multline*}
	{\rm Var}(\I_{\e,N}) =
	\frac{1}{N^2}\sum_{j=1}^N \left( \expec |X^j Y^j|^2-
	|\expec X^j Y^j|^2\right) + \\
	+ \frac{2}{N^2} \sum_{1 \leq j < k \leq N}
	{\rm Re}\ \expec \left((X^j Y^j - \expec X^j Y^j) 
	\overline{(X^k Y^k - \expec X^k Y^k)}\right).
\end{multline*}	
Using the identity 
\eqref{eq:aux_ineq2} and the Cauchy--Schwarz we obtain
\begin{equation*}
	\expec |X^j Y^j|^2-	|\expec X^j Y^j|^2
	= 2\expec |X^j|^2 \expec |Y^j|^2,
\end{equation*}
which is bounded by (\ref{Y_bounded_wrt_j}).
Consequently, Lemmas \ref{lem:crosscorr_decay} and \ref{lem:simple_ineq} guarantee that
${\rm Var}(\I_{\e,N}) = \O(N^{-1/2})$.
It remains to notice that the sum $\sum_{N=1}^\infty {\rm Var}(\I_{\e,N^3})$ converges,
which implies that
$\sum_{N=1}^\infty |\I_{\e,N^3} - \expec \I_{\e,N^3}|^2$ converges almost surely. Therefore
we have almost surely
\begin{align*}
\lim_{N\to\infty} \I_{\e,N^3}(\pt,\pgb) = \lim_{N\to\infty} \expec \I_{\e,N^3}(\pt,\pgb) = \beta_\e(\pt,\pgb).
\end{align*}
\end{proof}

\def\NN{\mathcal N}



Finally, the information obtained by ergodicity arguments
with probability one yields the values of the indicator function $\beta$ by continuity arguments.

\begin{theorem}
	\label{thm:unif}
	A single realization of the measurement $L(F)$ determines the function
	$(\pt, \pgb) \mapsto \I(\pt, \pgb)$ with probability one. 	
\end{theorem}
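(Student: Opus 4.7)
The plan is to combine the pointwise almost-sure convergence of Theorem \ref{thm:convergence} with a density/continuity argument in the finite-dimensional parameter space for $(\pt, \pgb)$. First, fix a countable dense subset $\NN$ of $(1,T) \times \p_+ SM \times S(\hat M \setminus M)$ and a sequence $\e_k \downarrow 0$. For every triple $((\pt, \pgb), k) \in \NN \times \N$, Theorem \ref{thm:convergence} provides a full-probability event $\Omega_{\pt, \pgb, k}$ on which $\I_{\e_k, N^3}(\pt, \pgb; \omega) \to \beta_{\e_k}(\pt, \pgb)$ as $N \to \infty$. The countable intersection $\Omega' = \bigcap \Omega_{\pt, \pgb, k}$ still has probability one. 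On $\Omega'$ the random variables $X_{\e_k}^j(\pt; \omega)$ and $Y_{\e_k}^j(\pgb; \omega)$ composing $\I_{\e_k, N^3}$ are almost surely determined by the realization $(W(\omega), L W(\omega))$: the former directly through $W(\omega)$, the latter via the integration-by-parts identity (\ref{pairing_Ww}) combined with the exterior solution operator $\mathcal K_{ex}$ applied to $\chi_T L W$. Consequently, the values $\beta_{\e_k}(\pt, \pgb)$ for $(\pt, \pgb) \in \NN$ and $k \in \N$ are recovered from a single realization of the measurement on $\Omega'$.

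To upgrade this countable information to the full parameter space, I would establish continuity of $(\pt, \pgb) \mapsto \beta_{\e}(\pt, \pgb)$ for each fixed $\e > 0$. Using the representation
\begin{equation*}
\beta_\e(\pt, \pgb) = \e^{-n/2} (\psi_\e(\pt), w_\e(\pgb))_{L^2((0,T) \times \p M)},
\end{equation*}
the continuity reduces to continuous dependence of the test function $\psi_\e(\pt)$ on $\pt = (s, y, \eta)$, which is immediate from its explicit Gaussian form in (\ref{def_psi}), and continuous dependence of the backward Gaussian beam $w_\e(\pgb)$ on the endpoint data $\pgb = (T, z, \zeta)$. The latter follows from smooth dependence of the geodesic flow on initial conditions, the smooth dependence of the transport equations determining the phase $\theta$ and the amplitude functions $u_n$ of the formal beam $U_\e$, and well-posedness of the backward Cauchy problem in Theorem \ref{th_from_gb_to_sol}. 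Density of $\NN$ together with this continuity promotes the recovery of $\beta_{\e_k}(\pt, \pgb)$ to every $(\pt, \pgb)$. Finally, Theorem \ref{lem_pairing_asymptotics} guarantees that $\beta(\pt, \pgb) = \lim_{k \to \infty} \beta_{\e_k}(\pt, \pgb)$ exists at each point, so the indicator-like function $\I = \beta$ is determined on $\Omega'$ from a single realization of $L W$.

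The main obstacle is the continuity step, since the Gaussian beam construction involves cutoffs $\chi_U$, $\chi_\psi$ and neighborhoods $V_U$ localized around the trajectory of $\gamma_U$, and these must be arranged to depend continuously on $\pgb$ and $\pt$. This can be handled by covering a compact exhaustion of the parameter space by finitely many patches on which a uniform choice of cutoffs and neighborhoods works, then glued by a partition of unity; the uniform transversality of geodesics to $\p M$ provided by the strict convexity assumption (A2) ensures that no beam grazes the boundary and that the stationary-phase asymptotics of Theorem \ref{lem_pairing_asymptotics} hold locally uniformly in $\pgb$. I emphasize that only the pointwise convergence of Theorem \ref{thm:convergence} is invoked here: uniform convergence in the parameters is never needed, since continuity of each $\beta_{\e_k}$ already extends the countable data on $\NN$ to the whole parameter space, and the limit in $k$ is taken afterwards on a pointwise basis.
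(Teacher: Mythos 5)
Your proposal is correct and follows essentially the same route as the paper: a countable dense parameter set together with a countable sequence of $\epsilon$'s, intersection of the full-probability events from Theorem \ref{thm:convergence}, continuity of $\beta_\epsilon$ in $(\pt,\pgb)$ for fixed $\epsilon$ (via the explicit form of $\psi_\epsilon$ and smooth dependence of the Gaussian beam data on $\pgb$ plus energy estimates), and finally the pointwise limit $\beta = \lim_{\epsilon\to 0}\beta_\epsilon$ from Theorem \ref{lem_pairing_asymptotics}. Your extra discussion of arranging the cutoffs $\chi_U$, $\chi_\psi$ continuously in the parameters is a reasonable elaboration of what the paper disposes of by citing the smooth parameter dependence of the formal beam.
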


\begin{proof}
For a fixed $\e>0$, the function $\beta_\epsilon$ is continuous.
Indeed, the inner product 
$(\psi_\epsilon(\pt), w_\epsilon(\pgb))_{L^2((0,T) \times \p M)}$
is clearly continuous with respect to the parameter $\pt$.
Moreover, the formal Gaussian beam $U_\epsilon$ is a smooth function of the variables $(t,x,\pgb)$, see e.g. \cite{Helin2010}.
In particular, the initial data in Theorem \ref{th_from_gb_to_sol} depends smoothly on $\pgb$,
whence the map 
\begin{align*}
\pgb \to w_\epsilon(\pgb) : S (\hat M \setminus M) \to C((0,T); H^1(\R^n))
\end{align*}
is continuous by the energy estimates for the wave equation. 

Let us fix a countable dense set $D_0$ in $D = \left((1,T) \times  \partial_+ SM\right)\times (S (\hat M \setminus M))$. Then with probability one, the equation \eqref{eq:ind=beta} is satisfied simultaneously for all $(\pt,\pgb,\epsilon) \in D_0 \times \{\frac 1 j \; | \; j\in\N\}$.
This together with the continuity of $\beta_\epsilon : D \to \R$ implies that
mappings
$$(\pt, \pgb) \mapsto \I_{1/j}(\pt, \pgb), \quad j\in\N,$$
are recovered. Now, for fixed parameters $\pt \in (1,T) \times  \partial_+ SM$
and $\pgb \in S (\hat M \setminus M)$ we have 
$\beta(\pt,\pgb) = \lim_{j\to\infty} \beta_{1/j}(\pt,\pgb)$
by Theorem \ref{lem_pairing_asymptotics}.
\end{proof}

Theorem \ref{thm_main} follows since the function $\beta$ determines the scattering relation. 

\appendix

\section{White noise as a $H^s_{loc}$-valued random variable}
\label{sec:appendix}

Gaussian white noise has been studied extensively in the literature, see e.g. \cite{Hida_etal}, and
there are different procedures to construct the corresponding probability measure. 
Let us mention the classical approaches by Hida \cite{Hida_original} and the abstract Wiener space construction by Gross \cite{Gross}. It is well-known \cite{Kusuoka82} that the white noise measure is supported on any 
$H^{-s}_{loc}(\R)$, $s >\frac 12$. However, to see whether the classical constructions extend to a Borel measure in this space requires some work. For clarity, we provide a direct construction below. 

Let $D = (1-\p_t^2)^{1/2}$ and write $$H^{-s}(\R,m) =  D^s m^{-\frac 12} L^2(\R),$$
where $m(t) = \frac{1}{1+t^2}$. Clearly, $H^{-s}(\R,m)$ is a separable Hilbert space with the inner product
$(f,g)_{H^{-s}(\R,m)} = (m^{1/2} D^{-s} f, m^{1/2} D^{-s} g)_{L^2(\R)}$.
We refer to \cite{Lofstrom1982} for further properties of $H^{-s}(\R,m)$.

We define $C = D^{-s} m D^{-s}$. 
Let $\{e_j\}_{j\in\N}$ be an orthonormal basis for $L^2(\R)$. Since $D^s m^{-1/2} : L^2(\R) \to H^{-s}(\R,m)$ is an isometry 
we have that functions $f_j = D^s m^{-1/2} e_j$, $j\in \N$, form an orthonormal basis in $H^{-s}(\R,m)$.
By a direct calculation we obtain
$$(C f_j, f_k)_{H^{-s}(\R,m)} = (S e_j,e_k)_{L^2(\R)},$$ where $S=m^{1/2} D^{-2s} m^{1/2}$. The operator $S$ is bounded in $L^2(\R)$ and has a Schwartz kernel
\begin{equation}
	k(x,y) = (1+x^2)^{-\frac 12} (1+y^2)^{-\frac 12} \int_\R \exp(-i(y-x)\xi) (1+\xi^2)^{-s} d\xi.
\end{equation}
Clearly, we have $\int_\R k(x,x) dx < \infty$ when $s > 1/2$ and thus $S$ is trace-class by \cite{Brislawn}. In consequence, also $C$ is trace-class and it follows by \cite[Prop. 2.18]{DaPrato_Zabczyk} that there exists a zero-centered Gaussian measure $\mu_\R$ on $H^{-s}(\R,m)$ with the covariance $C$.

Suppose that $W_\R:\Omega \to H^{-s}(\R,m)$ is a random variable with the distribution $\mu_\R$. For $\phi,\psi \in C^\infty_0(\R)$ we obtain 
\def\duaali{{\mathcal D}' \times {C_0^\infty}}
\begin{align*}
	&\expec ( W_\R, \phi )_{\duaali(\R)} ( W_\R, \psi)_{\duaali(\R)} 
	 \\&\quad= \expec ( W_\R, C^{-1}\phi )_{H^{-s}(\R,m)} ( W_\R, C^{-1}\psi)_{H^{-s}(\R,m)} 
	 \\&\quad=  ( C C^{-1} \phi, C^{-1}\psi)_{H^{-s}(\R,m)} 
	 = ( \phi,\psi)_{L^2(\R)}.
\end{align*}
Hence, $W_\R$ is called the white noise.  

Let us now consider the source studied in this paper. From a construction analogous to the above one, we obtain the white noise measure $\mu_{\p M}$ on $H^{-s_1}(\p M)$, $s_1>(n-1)/2$.
We denote by $\iota$ the embedding of $\p M$ into $\R^n$ 
and use the notation from \cite{Hormander1990} for 
the push-forward $\iota_*$.
We extend $\mu_{\p M}$ to $\R^n$ by defining $\mu_{\R^n} = \iota_* \mu_{\p M}$.
Notice that $\iota_*$ is continuous 
$H^{-s_1}(\p M) \to H^{-s_2}(\R^n)$
where $s_2 = s_1 + 1/2$.
Thus the measure $\mu_{\R^n}$ is Borel on $H^{-s_2}(\R^n)$.
We have the continuous embedding of the tensor product 
\begin{align*}
H^{-s}(\R,m) \otimes H^{-s_2}(\R^n) 
&\hookrightarrow H^{-s_3}(\R^{1+n},\tilde m) 
\hookrightarrow H^{-s_3}_{loc}(\R^{1+n})
\end{align*}
where $\tilde m(t,x) = m(t)$ and $s_3 = s + s_2$. 
Thus $\mu_{\R^{1+n}} = \mu_\R \otimes \mu_{\R^n}$ 
extends to a Borel measure on $H^{-(n+1)/2 - \epsilon}_{loc}(\R^{1+n})$ for any $\epsilon > 0$.
Let us point out that if $W : \Omega \to H^{-(n+1)/2 - \epsilon}_{loc}(\R^{1+n})$
is a random variable with the distribution $\mu_{\R^{1+n}}$ then it satisfies (\ref{the_correlations}).

Finally, we remind the reader that $\supp(W) \subset \R \times \p M$. In particular,
$\chi_+ W$ is almost surely in the space $H^s_{locc}((0,\infty)\times B)$ for
any open set $B$ containing $\p M$. The space
$H^s_{locc}((0,\infty)\times B)$ is defined in Section
\ref{subsec:traces}.

\bibliographystyle{abbrv} 
\bibliography{main}
\end{document}